\newtheorem{thm}{Theorem}
\newtheorem{theorem}[thm]{Theorem}
\newtheorem{corollary}[thm]{Corollary}
\newtheorem{prop}[thm]{Proposition}
\newtheorem{lem}[thm]{Lemma}
\theoremstyle{remark}
\newtheorem{rem}{Remark}
\theoremstyle{definition}
\definecolor{darkblue}{rgb}{0.13,0.13,0.39}
\newcommand\nico[1]{\todo[color=green!40]{#1}}
\newcommand\nicolas[1]{{\todo[inline,color=green!40]{NZ: #1}}}
\newcommand{\I}{{\rm i}} 
\let\nicefrac\frac
\newcommand{\ts}{\hspace{0.1em}}
\newcommand{\tts}{\hspace{0.05em}}
\newcommand{\uptext}[1]{\text{\upshape{#1}}}
\newcommand{\sbinom}[2]{\bigg(\ts\begin{array}{@{}c@{}}#1\\#2\end{array}\ts\bigg)}
\date{\today}
\begin{document}

	\title{Restricted maximum of non-intersecting Brownian bridges}
	\author{Yamit Yalanda and Nicolás Zalduendo}

	\address[Y. Yalanda]{Departamento de Ingeniería Matemática, Universidad de Chile}
	\email{yamit@dim.uchile.cl}
	\address[N. Zalduendo]{Université de Lorraine, CNRS, Inria, IECL, UMR 7502, F-54000 Nancy, France}
	\email{zalduend1@univ-lorraine.fr}

\begin{abstract}
Consider a system of $N$ non-intersecting Brownian bridges in $[0,1]$, and let $\mathcal M_N(p)$ be the maximal height attained by the top path in the interval $[0,p]$, $p\in[0,1]$.
It is known that, under a suitable rescaling, the distribution of $\mathcal M_N(p)$ converges, as $N\to\infty$, to a one-parameter family of distributions interpolating between the Tracy-Widom distributions for the Gaussian Orthogonal and Unitary Ensembles (corresponding, respectively, to $p\to1$ and $p\to0$).
It is also known that, for fixed $N$, $\mathcal M_N(1)$ is distributed as the top eigenvalue of a random matrix drawn from the Laguerre Orthogonal Ensemble. Here we show a version of these results for $\mathcal M_N(p)$ for fixed $N$, showing that $\mathcal M_N(p)/\sqrt{p}$ converges in distribution, as $p\to0$, to the rightmost charge in a generalized Laguerre Unitary Ensemble, which coincides with the top eigenvalue of a random matrix drawn from the Antisymmetric Gaussian Ensemble.
\end{abstract}
	
\maketitle
	
\section{Introduction and main results}

\subsection{Model and motivation}

The model of \emph{non-intersecting Brownian bridges} consists in a collection of $N$ Brownian bridges $(B_1(t) , B_2 (t) , \dotsc , B_N (t))$, all starting from zero at time $t = 0$ and ending at zero at time $t = 1$, and conditioned (in the sense of Doob) to not intersect for $t \in (0, 1)$.
We will always order the $N$ paths increasingly, so that $B_1(t)<B_2(t)<\dotsm<B_N(t)$.
This model and its many variants have attracted a lot of attention in the probability and statistical physics literature, in large part due to their connections with Random Matrix Theory (RMT).
The most basic relation with RMT is that under a simple change of variables, non-intersecting Brownian bridges are mapped to the stationary Dyson Brownian motion for the Gaussian Unitary Ensemble (GUE), which is the evolution of the eigenvalues of an $N\times N$ Gaussian Hermitian matrix whose entries undergo independent (stationary) Ornstein-Uhlenbeck processes (see \eqref{eq:Btolambda}).
In particular, for fixed $t\in (0,1)$ the distribution of $(B_1(t),\dots, B_N(t))$ coincides with the distribution of the GUE eigenvalues, which also means that the properly rescaled top position $B_N(t)$ converges to the Tracy-Widom GUE distribution $F_\text{GUE}$ \cite{tracy1994level} which governs the asymptotic fluctuations of the top eigenvalue of a GUE matrix:
\begin{equation}
\PP\left(\dfrac{B_N(t)}{\sqrt{2t(1-t)}}\leq\sqrt{N}+\tfrac12N^{-1/6}r\right)\xrightarrow[N\to\infty]{}F_\text{GUE}(r).
\end{equation}

Models of non-intersecting random paths also arise in the study of the Kardar-Parisi-Zhang (KPZ) universality class, a broad collection of one-dimensional random growth and related models which share a common fluctuation behavior after suitable space-time rescaling, governed by the \emph{KPZ fixed point} \cite{matetski2021KPZ}.
The limiting distributions and spatial processes arising asymptotically for KPZ models depend on the choice of initial data.
The simplest one corresponds to growth from a single point, referred to as \emph{droplet} or \emph{narrow wedge} initial condition, in which case the spatial fluctuations of the model converge to the \emph{Airy$_2$ process} minus a parabola $\mathcal A_2(x)-x^2$, whose one point marginals are given by the TW-GUE distribution.
The same Airy$_2$ process arises as the scaling limit of the top path of the $N$ non-intersecting Brownian bridges:
\begin{equation}\label{eq:bbAiry2}
  2N^{1/6}\Big(B_N\big(\tfrac12(1+N^{-1/3}x)\big)-\sqrt{N}\Big)\longrightarrow\mathcal A_2(x)-x^2.
\end{equation}
On the other hand, a famous result by Johansson \cite{johansson2003discrete} showed, using an argument based on a connection with KPZ models with \emph{flat} initial data, that $\sup_{x\in\RR}\big(\mathcal A_2(x)-x^2\big)$ is distributed as a Tracy-Widom GOE random variable \cite{tracy1996orthogonal}, corresponding to the asymptotic fluctuations of the top eigenvalue of a Gaussian real symmetric random matrix.
Putting these two facts together one concludes that the rescaled maximal height of $B_N(t)$ converges in distribution to a TW-GOE random variable: more precisely,
\begin{equation}
\PP\Big(2N^{-1/6}\big(\max_{t\in[0,1]}B_N(t)-\sqrt{N}\big)\leq r\Big)\xrightarrow[N\to\infty]{}F_\text{GOE}(4^{1/3}r),\label{eq:maxBN}
\end{equation}
a fact which by now has been proved in several ways in the literature (see for example \cite{nguyen2017non},\cite{fitzgerald2020point}).
We refer the reader to \cite{quastel2014airy} and to the introductions of \cite{nguyen2017extreme,nguyen2017non} and references therein for more background on the facts being discussed here.
The relation between models of non-intersecting Brownian motions and other objects coming from RMT, integrable systems and the KPZ class has been studied intensively from many perspectives, see e.g. \cite{tracy2004differential,tracy2007nonintersecting,warren2007dyson,schehr2008exact,adler2009Dyson,adler2010airy,forrester2011nonintersecting,liechty2012nonintersecting,liechty2017nonintersecting,liechty2020airy}.

Consider now the maximal height of the top path restricted to an interval $[0,p]$, $p\in[0,1]$:
\begin{equation}\label{eq:MNp}
\mathcal{M}_N(p)=\max_{t\in[0,p]}B_N(t).
\end{equation}
Based on the same connection \eqref{eq:bbAiry2} between non-intersecting Brownian bridges and the Airy$_2$ process, together with known KPZ results, the following distributional limit for $\mathcal{M}_N(p)$ can be derived: letting $p(\alpha)=e^{2\alpha}/(1+e^{2\alpha})$ we have, for each $\alpha\in\RR$,
\begin{subequations}\label{eq:F21}
\begin{gather}
	\lim_{N\to\infty} \mathbb P\left(2N^{1/6}\big(\mathcal M_N(p(\alpha N^{-1/3}))\cosh(\min\{\alpha N^{-\nicefrac{1}{3}},0\})-\sqrt{N}\big)\leq r\right) = F^{(\alpha)}_{2\to1} (r)\label{eq:F21a}\\
	\text{with}\qquad F^{(\alpha)}_{2\to1} (r)\xrightarrow[\alpha\to\infty]{}F_\text{GOE}(4^{1/3}r),
	\qquad F^{(\alpha)}_{2\to1} (r)\xrightarrow[\alpha\to-\infty]{}F_\text{GUE}(r).\label{eq:F21b}
\end{gather}
\end{subequations}
More concretely, $F^{(\alpha)}_{2\to1}$ corresponds to the marginal distribution of the Airy$_{2\to1}$ process arising as the scaling limit of KPZ models with \emph{half-flat} initial data \cite{borodin2008transition}, whose distribution is known to interpolate between TW-GOE and TW-GUE.
The $\alpha\to\infty$ limit to TW-GOE corresponds to \eqref{eq:maxBN}.
For the other case, $\alpha\to-\infty$ corresponds to the maximum in \eqref{eq:MNp} being computed in an interval $[0,p]$ with $p\to0$, while for large $N$ the top path is known to concentrate around the curve $2\sqrt{Nt(1-t)}$, and thus to first order one expects the maximum on $[0,p]$ to occur near $p$ as $p\to0$ and to be close to $2\sqrt{Np(1-p)}$; the hyperbolic cosine factor in \eqref{eq:F21a} compensates for this decay, and the fluctuations then come from the value of $B_N$ at the right edge of the interval, which has TW-GUE fluctuations.
The derivation of \eqref{eq:F21a} is by now relatively standard.
There are two possible routes.
The first one is to use a version of \eqref{eq:bbAiry2} where the convergence holds uniformly in compact sets \cite{corwin2014gibbs} together with estimates on the tails of the Airy$_2$ process to show that the limit equals $\mathbb P(\sup_{x\leq\alpha}(\mathcal A_2(x)-x^2)+\min\{\alpha,0\}^2\leq r)$, which was shown in \cite{quastel2013supremum} to be given by the marginals of the Airy$_{2\to1}$ process.
The other one involves calculating the left hand side directly and checking that it converges to the known expressions for $F^{(\alpha)}_{2\to1}$ (this can be done using the formula derived Proposition below \ref{prop:maximum_height} below, but we omit the details).

The focus of this paper is the random variable $\mathcal M_N(p)$ for fixed $N\in\NN$.
In \cite{nguyen2017non} it was shown that in the case $p=1$ (i.e. when we look at the maximal height of the system of non-intersecting Brownian bridges over the whole interval $[0,1]$), the square of $\mathcal M_N(1)$ is distributed as the largest eigenvalue of a random matrix drawn from the Laguerre Orthogonal Ensemble, which provides a \emph{finite $N$} version of \eqref{eq:maxBN}. 
We will be interested in the other limiting case, i.e. the distribution of $\mathcal{M}_N(p)$ (after proper rescaling) as $p\to0$.
Some results available in the literature strongly suggest a connection between this distribution and that of the largest eigenvalue of a different random matrix family (see Section 1.3 for more details).
The aim of this paper is to prove this connection and identify this limiting distribution which, as we prove, corresponds to the largest eigenvalue of a matrix of the Gaussian Antisymmetric Ensemble. To arrive at this result, we will exploit the relation between this family of matrices and the Generalized Laguerre Unitary Ensemble (see Theorem 1 below).

\subsection{Main result}

Let $X$ be an $N\times m$ matrix, $m\geq N$, with independent real or complex standard Gaussian entries (in the complex case, the real and imaginary parts of each entry are independent with variance $1/2$).
The $N\times N$ matrix $M=XX^*$ is sometimes referred to as a \emph{real} or \emph{complex Wishart matrix}, and plays a central role in multivariate statistics as the sample covariance matrix of a Gaussian population.
The eigenvalues $\lambda_1\leq\dotsm\leq\lambda_N$, and particularly the largest eigenvalues of Wishart matrices, are of particular importance in statistical applications such as principal component analysis.
The joint distribution of this eigenvalues can be computed explicitly, and is given by (see \cite{forrester2010log}, page 91)
\begin{equation}\label{eq:laguerre-ensemble}
	\frac{1}{Z_N}\prod_{1\leq i<j\leq N} |\lambda_i-\lambda_j|^\beta\prod_{i=1}^N
	\lambda_i^{\nicefrac{\beta a}{2}}e^{-\beta\nicefrac{\lambda_i}{2}}
\end{equation}
with $\beta=1$ in the real case, $\beta=2$ in the complex case and $a=m-N+1-\nicefrac{2}{\beta}$ ($Z_N$ is a normalization constant).
The weights $\lambda_i^{\nicefrac{\beta a}{2}}e^{-\beta\nicefrac{\lambda_i}{2}}$ are associated to the \emph{generalized Laguerre orthogonal polynomials}, and based on this the random matrix $M$ is said to belong to the \emph{Laguerre Orthogonal Ensemble} in the real case and to the \emph{Laguerre Unitary Ensemble} in the complex case (the orthogonal and unitary names coming from the fact that the distribution of $M$ is invariant under conjugation by fixed matrices from, respectively, the groups $O(N)$ and $U(N)$).

The distribution of the eigenvalues of an $N\times N$ LOE or LUE matrix depends on the parameter $a$, which is determined by the aspect ratio of the matrix $X$.
Let $\bar\lambda_{\text{L(O/U)E},a}$ denote the largest eigenvalue of such a matrix and let $F_{\text{L(O/U)E},a}$ denote its distribution, i.e.
\begin{equation}
F^{(a)}_{\text{L(O/U)E},N}(r)=\PP\big(\bar\lambda_{\text{L(O/U)E},a}\leq r\big).\label{eq:FaLag}
\end{equation}
Then \cite{nguyen2017non} showed that
\begin{equation}
\PP\!\left(\mathcal M_N(1) \leq r\right) = F^{(1)}_{\text{LOE},N}(4r^2).\label{eq:nibm-loe}
\end{equation}
In other words, $4\max_{t\in[0,1]}B_N(t)^2$ is distributed as the largest eigenvalue 
of an LOE matrix $M=XX^T$ with $X$ of size $N\times(N+1)$.

The largest eigenvalue of an LOE matrix is known to converge to a TW-GOE random variable \cite{johnstone2001distribution} (more precisely, in our case we have $\lim_{N\to\infty}F^{(1)}_{\text{LOE},N}(4N+4^{2/3}N)=F_{\text{GOE}}(r)$), and hence \eqref{eq:nibm-loe} can indeed be regarded as a finite $N$ analog of the first limit in \eqref{eq:F21b}.
It is natural then to wonder about a version of the second limit in \eqref{eq:F21b}, corresponding to studying $\mathcal M_N(p)$ with $p\to0$.

Of course, as $p\to0$ the maximum of $B_N(t)$ for $t\in[0,p]$ goes to $0$ too, so in order to see something interesting we need to rescale $\mathcal M_N(p)$ before taking the limit.
$B_N(t)$ grows like $\sqrt{t}$ for small $t$, so we will study the distributional limit
\begin{equation}\label{eq:hatM0}
\mathcal{\hat M}_N(0)\coloneqq\lim_{p\to0}\frac{\mathcal M_N(p)}{\sqrt{p}}.
\end{equation}
In view of \eqref{eq:F21} and \eqref{eq:nibm-loe}, a natural guess is that this limit be related with the Laguerre Unitary Ensemble.
Our main result, which we state next, confirms this, with the caveat that the parameter $a$ has to be allowed to take non-integer values.
Note that \eqref{eq:laguerre-ensemble} defines a probablity density for the vector $(\lambda_1,\dotsc,\lambda_N)\in\RR^N$ for any $a>-1$.
In general the $\lambda_i$'s do not arise as eigenvalues of a naturally defined random matrix, and we think of \eqref{eq:laguerre-ensemble} as defining a point process, or a \emph{Coulomb gas}, in the real line.
In any case, $F^{(a)}_{\text{L(O/U)E},N}$ still makes sense as the distribution of the largest charge $\bar\lambda_{\text{L(O/U)E},a}$ among the $\lambda_i$'s.

\begin{theorem}\label{thm:maximum_height} For every $N\in \mathbb N$ and $r\geq 0$,
\begin{equation}\label{eq:maximum_height}
\lim_{p\to0}\PP\big(\mathcal M_N(p)\leq\sqrt{p}\ts r)
=\begin{dcases*}
F^{(1/2)}_{\uptext{LUE},N/2}(r^2/2) & if $N$ is even,\\
F^{(-1/2)}_{\uptext{LUE},(N+1)/2}(r^2/2)& if $N$ is odd.
\end{dcases*}
\end{equation}
In other words, the distributional limit $\mathcal{\hat M}_N(0)$ in \eqref{eq:hatM0} is well defined, and  $\mathcal{\hat M}_N(0)^2/2$ has the distribution of the largest charge 
in the \emph{generalized Laguerre Unitary Ensemble} defined through \eqref{eq:laguerre-ensemble} with size $\lfloor(N+1)/2\rfloor$, $\beta=2$ and $a=\frac12(-1)^{N}$.
\end{theorem}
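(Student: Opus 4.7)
The plan is to begin from the exact finite-$p$ formula for $\PP(\mathcal M_N(p)\leq M)$ supplied by Proposition \ref{prop:maximum_height} below, set $M=\sqrt{p}\,r$, and perform the $p\to0$ asymptotic analysis.  Intuitively, Brownian scaling in the rescaled time $s=t/p$ turns the $N$-tuple of bridges restricted to $[0,p]$ into $N$ non-intersecting Brownian motions on $[0,1]$: writing $\widetilde B_i^{(p)}(s):=B_i(ps)/\sqrt{p}$, the process $(\widetilde B_i^{(p)})_{i=1}^N$ converges in law on $C([0,1];\RR^N)$, as $p\to 0$, to the $\beta=2$ Dyson Brownian motion $W=(W_1,\dotsc,W_N)$ started at the origin, since the bridge conditioning at $t=1$ fades on the short window $[0,p]$.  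Continuity of the running-maximum functional then yields
\begin{equation*}
\mathcal M_N(p)/\sqrt{p}\ =\ \max_{s\in[0,1]}\widetilde B_N^{(p)}(s)\ \xrightarrow[p\to 0]{d}\ \max_{s\in[0,1]}W_N(s),
\end{equation*}
so the limit $\mathcal{\hat M}_N(0)$ in \eqref{eq:hatM0} is well defined; this establishes the first assertion of Theorem \ref{thm:maximum_height}.

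The core of the proof is to identify the distribution of $\max_{s\in[0,1]}W_N(s)$ with the largest charge of the generalized LUE specified in the theorem.  Proposition \ref{prop:maximum_height} should express $\PP(\mathcal M_N(p)\leq M)$ as an $N\times N$ determinant built from Brownian-bridge transition kernels absorbed at level $M$.  After substituting $M=\sqrt{p}\,r$ and making the time change $t=ps$, the starting points of all paths collapse to the origin while the upper barrier becomes the fixed level $r$, and each matrix entry converges to a one-particle quantity for Brownian motion killed at $r$ on $[0,1]$.  Since the starting coordinates coalesce, the determinant vanishes to leading order; the correct nonvanishing order is extracted by Taylor-expanding each entry in its first argument and applying Cauchy--Binet, which produces a Vandermonde factor in the squared endpoint variables $\lambda_i=y_i^2/2$ paired with a product of Laguerre-type weights in the $\lambda_i$.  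The unconditioned Karlin--McGregor partition function in the denominator degenerates at the matching rate, so the ratio has a finite, explicit limit of the form \eqref{eq:laguerre-ensemble} with $\beta=2$.

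The parity-dependent matrix size $\lfloor(N+1)/2\rfloor$ and Laguerre exponent $a=\tfrac12(-1)^N$ emerge naturally from this Taylor expansion: the symmetry of the absorbing kernel and the subsequent change of variable $\lambda=y^2/2$ fold the $N$ real endpoint coordinates into $\lfloor(N+1)/2\rfloor$ positive $\lambda$'s, with the residual contribution in the odd case producing the shifted Laguerre exponent.  The same parity pattern is dictated on the random-matrix side by the classical identification, referenced in Section 1.3, of the squared positive spectrum of an $N\times N$ Gaussian Antisymmetric matrix with the generalized LUE of size $\lfloor(N+1)/2\rfloor$ and Laguerre parameter $a=\tfrac12(-1)^N$, giving a useful consistency check.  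The main technical obstacle is controlling the degenerating Karlin--McGregor determinant: individual matrix entries vanish or diverge at distinct rates as $p\to 0$, so one must justify commuting the $p\to 0$ limit with the endpoint integration and verify that the leading Taylor terms in the numerator cancel exactly against those of the denominator to leave the Laguerre joint density of \eqref{eq:laguerre-ensemble}.
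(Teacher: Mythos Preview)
Your strategy is genuinely different from the paper's, but it rests on two unsupported steps. First, you invoke Proposition \ref{prop:maximum_height} as if it were an $N\times N$ Karlin--McGregor determinant of absorbing Brownian-bridge kernels; in fact it is a Fredholm determinant on $L^2(\RR)$ built from the Hermite projection $K_\text{H}^N$, the reflection $\varrho_{r\cosh\alpha}$, and the multipliers $E_{\pm r\sinh\alpha}$. Nothing in that formula hands you the absorbed transition kernels you need, so if you want a Karlin--McGregor starting point you must derive it from scratch. Second, and more seriously, the ``folding'' step is where the entire content of the theorem lies, and your sketch does not explain it. The absorbing barrier sits at level $r$, not at the origin, so the kernel $p_1^r(x,y)=p_1(x,y)-p_1(x,2r-y)$ is odd about $y=r$, not about $y=0$; the substitution $\lambda=y^2/2$ you propose does not match this symmetry, and there is no evident mechanism by which the $N$ endpoint integrations on $(-\infty,r)$ collapse to $\lfloor(N+1)/2\rfloor$ Laguerre variables with the claimed parity-dependent exponent. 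The connection to non-colliding paths with a wall (and thence to the antisymmetric ensemble) that you cite from Section 1.3 is itself nontrivial---it goes through the Borodin--Kuan identity rather than a direct change of variables---so invoking it as a ``consistency check'' does not close the gap.

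The paper takes a completely different route that avoids these issues. It first passes to the $p\to0$ limit at the level of the Fredholm determinant (Corollary \ref{cor:maximum_height_0}), obtaining $U_N(r)=\det(I-K_\text{H}^N\chi_rK_\text{H}^N-K_\text{H}^NE_{-r}\varrho_rE_r\bar\chi_rK_\text{H}^N)$; this limit is a routine trace-norm estimate because the $p$-dependence enters only through the scalars $\cosh\alpha$ and $\sinh\alpha$. The identification with the LUE is then purely algebraic: since $K_\text{H}^N$ has rank $N$, $U_N(r)$ reduces to $\det(I-QF)$ for explicit $N\times N$ matrices $Q,F$ built from Hermite integrals, and the paper exhibits rectangular matrices $S,T$ of size $\lfloor(N+1)/2\rfloor\times N$ and $N\times\lfloor(N+1)/2\rfloor$ satisfying $2TS=F$, $ST=I$, and $2SQT=A$, where $A$ is the corresponding Laguerre matrix. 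The cyclic property of the determinant then gives $\det(I-QF)=\det(I-A)$ directly. The halving of the matrix size and the parity of the Laguerre parameter come out of these factorization identities (proved via Hermite-polynomial combinatorics in Lemmas \ref{lem:first lemma} and \ref{lem:second lemma}), not from any probabilistic folding of endpoints. Your Brownian-scaling heuristic for the existence of the limit $\mathcal{\hat M}_N(0)$ is sound, but to reach the LUE identification you would need either to carry out the degenerate Karlin--McGregor analysis in full---including a correct reflection/folding that produces the half-integer Laguerre weight---or to fall back on the paper's algebraic argument.
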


Together, the identities \eqref{eq:nibm-loe} and \eqref{eq:maximum_height} provide an analog of \eqref{eq:F21}, which can be stated as follows:
\begin{equation}\label{eq:forLtoG}
\begin{gathered}
\hat F^p_N(r)\coloneqq\PP\big(\mathcal M_N(p)\leq\sqrt{p}\ts r)
\longrightarrow\begin{dcases*}
F_{\text{LOE},N}^{(1)}(4r^2) & as $p\to1$,\\
F_{\text{LUE},\lfloor(N+1)/2\rfloor}^{(\frac12(-1)^N)}(r^2/2) & as $p\to0$.
\end{dcases*}
\end{gathered}
\end{equation}
From results of \cite{johansson2000shape} and \cite{johnstone2001distribution} it is known that, under the scaling implied by \eqref{eq:F21}, the LOE and LUE distributions on the right hand side of \eqref{eq:forLtoG} converge to TW-GOE and TW-GUE, so these limits are consistent with \eqref{eq:F21b} (see the discussion after Theorem 1.2 in \cite{nguyen2017non} for more details).

\subsection{Matrix model for \texorpdfstring{$\mathcal{\hat M}_0$}{M0}}

We have obtained the distributions $F_{\text{LUE},N}^{(\pm1/2)}$ appearing in Theorem \ref{thm:maximum_height} as those of the largest charge in a generalized Laguerre Unitary Ensemble.
This ensemble appears not to be related to some sort generalized Wishart matrix but, remarkably, there is another simple random matrix model whose eigenvalues recover this distribution.

Given $n\in \NN$, a $n\times n$ (purely imaginary) random matrix $H$ is said to be drawn from the \emph{Antisymmetric Gaussian Ensemble} if $H=\frac{\I}{2}(X-X^{\sf T})$ for $X$ a real random matrix with independent standard Gaussian entries.
The non-zero eigenvalues of this matrix come in pairs $\{\pm \lambda_j\}_{j=1,\dots,\lfloor n/2\rfloor}$, and the joint density function for the positive eigenvalues is given by \cite{mehta2004random,dumitriu2010tridiagonal} 
\[\frac{1}{Z_n}
\prod_{i=1}^{\lfloor n/2\rfloor}
\lambda_i^{1+(-1)^{n+1}}e^{-\lambda_i^2}
\prod_{1\leq j<k\leq \lfloor n/2\rfloor} 
(\lambda_i^2-\lambda_j^2)^2,
\]
where $Z_n$ is a normalization constant.
Applying the change of variables $\lambda_j^2\longmapsto\lambda_j$ this joint density becomes that of the generalized LUE with $N=n-1$ and $a=\frac12(-1)^{n}$.

In other words, this shows that
\[\hat M_N(0)\stackrel{\uptext{dist}}{=}\sqrt{2} \hspace{1mm} \bar\lambda_{\text{AntiGE},N+1},\]
where $\bar\lambda_{\text{AntiGE},n}$ denotes the largest eigenvalue of an $n\times n$ matrix from the Antisymmetric Gaussian Ensemble.

Let us briefly explain the connection between this identity and some results which are available in the literature.
Our system of non-intersecting Brownian bridges can be translated into GUE Dyson Brownian motion, see Proposition 2.
In \cite{borodin2009maximum} (Theorem 1) the authors established an identity in distribution for the maximum of the top path in this process, showing that it can be expressed as the location of the top path in a system of non-colliding paths with a wall.
This connection does not yield exactly the distribution for $\mathcal{M}_N(p)$, but using a comparison based on Brownian scaling one can obtain from that random variable information about $\hat{\mathcal{M}}_N(0)$. Systems of non-colliding paths with a wall have also arisen in other contexts in the KPZ universality class, with a connection suggested to the Gaussian Antisymmetric Ensemble (see e.g. \cite{tracy2007nonintersecting}, \cite{borodin2009twospeed}).
However, the connection with this random matrix family or the Generalized Laguerre Unitary Ensemble in our context appears not to have been proved (or in fact, made explicit) in any form which is useful for the problem we tackle.
Our contribution in this paper is to establish this precisely, and to do so based on a direct proof.


\section{Fredholm determinant formula for the distribution of the restricted maximal height}
\label{sec:proof_N_paths}

The goal of this section is to derive Fredholm determinant formulas for the distribution of the random variables $\mathcal{\hat M}_N(0)$ and $\mathcal M_N(p)$.
Let $\varphi_n$ be the \emph{harmonic oscillator functions}, or \emph{Hermite functions}, defined by $\varphi_n(x)=e^{-x^2/2}p_n(x)$, with $p_n$ the $n$-th Hermite polynomial (see e.g. \cite[\S 18.3]{NIST:DLMF}) normalized so that $\|\varphi_n\|_2=1$, and then define the \emph{Hermite kernel} as
\begin{equation}\label{eq:defKN}
  K_\text{H}^N(x,y)=\sum_{n=0}^{N-1}\varphi_n(x)\varphi_n(y).
\end{equation}
We also define the \emph{reflection operator} $\varrho_r$ as well as projection and multiplication operators $\chi_r$ and $E_r$, acting on $L^2(\RR)$, through
\begin{equation}\label{eq:def-varrho-M}
  \varrho_rf(x)=f(2r-x),\qquad\chi_rf(x)=\mathds{1}_{x>r}f(x),\qquad\text{and}\qquad E_rf(x)=e^{rx}f(x).
\end{equation}
Let also $\bar\chi_r=I-\chi_r$.

\begin{prop}\label{prop:maximum_height}
	Consider $N\in \mathbb N$ and $r\geq 0$. Fix $p\in(0,1)$ and let $\alpha =\frac{1}{2}\log(\frac{p}{1-p})$.
	Then,
	\begin{multline}
	\label{eq:det_formula}
	\PP(\sqrt{2}\mathcal M_N(p) \leq r) \\
	= \det\!\left(I-K_\text{H}^N\chi_{r\cosh(\alpha)}K_\text{H}^N - K_\text{H}^NE_{r\sinh(\alpha)}\varrho_{r\cosh(\alpha)}E_{-r\sinh(\alpha)}\bar\chi_{r\cosh(\alpha)}K_\text{H}^N\right),
	\end{multline}
	where $\det$ means the Fredholm determinant on the Hilbert space $L^2(\RR)$.
\end{prop}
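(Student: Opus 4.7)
The plan is to combine the Karlin--McGregor formula with the reflection principle for the absorbing barrier on $[0,p]$, pass to the Brownian bridge limit, and identify the resulting expression as a Fredholm determinant in the basis of Hermite functions. By non-intersection, $\{\sqrt{2}\,\mathcal M_N(p)\leq r\}$ coincides with $\{B_i(t)\leq \tilde r \text{ for all } i\leq N \text{ and } t\in[0,p]\}$ with $\tilde r := r/\sqrt 2$; I would first apply Karlin--McGregor with the absorbing boundary at $\tilde r$ on $[0,p]$ to write the joint density of $\mathbf B(p) = \mathbf z$ as a product of two $N\times N$ determinants: one for $[0,p]$ with entries $P_p^{\tilde r}(x_i,z_j) = P_p(x_i,z_j) - P_p(x_i,2\tilde r - z_j)$ (the subtraction encoding the barrier via reflection), and one for $[p,1]$ with entries $P_{1-p}(z_i,y_j)$, all in the bridge limit $\mathbf x,\mathbf y\to \mathbf 0$. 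That limit replaces the $i$-th row of each determinant by the $(i-1)$-st derivative of the transition density at the origin; the identity $\partial_x^n e^{-(z-x)^2/(2t)}|_{x=0} = t^{-n/2}H_n(z/\sqrt t)\,e^{-z^2/(2t)}$ then produces Hermite polynomials times Gaussians. The rescaling $\lambda := \sqrt{2}\,(\cosh\alpha)\,z$ sends $\tilde r$ to $c = r\cosh\alpha$ and turns the combined Gaussian $e^{-z^2/(2p(1-p))}$ into $e^{-\lambda^2}$, matching the basis $\{\varphi_n\}$ underlying $K_\text{H}^N$.

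Next, applying the Andr\'eief identity collapses $\int_{z_1<\dotsm<z_N}\det[\phi_i(z_j)]\det[\psi_i(z_j)]\,d\mathbf z$ into an $N\times N$ determinant of inner products, which the standard rank-$N$ correspondence then expresses as a Fredholm determinant on $L^2(\RR)$ in terms of $K_\text{H}^N$. The direct contribution from $P_p(x,z)$ gives $K_\text{H}^N\chi_c K_\text{H}^N$: after the rescaling, the region $z>\tilde r$ excluded by the barrier becomes $\{\lambda > c\}$, producing $\chi_c$ between the two copies of $K_\text{H}^N$. For the reflected contribution from $P_p(x,2\tilde r - z)$, completing the square in $(2\tilde r - z)^2/(2p) + z^2/(2(1-p))$ yields, in $\lambda$-coordinates, a Gaussian centered at $c - s = re^{-\alpha}$ (with $s := r\sinh\alpha$) together with an overall factor $e^{-2\tilde r^2} = e^{-r^2}$. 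Using $c^2 - s^2 = r^2$, this shift and prefactor reproduce exactly the Girsanov-type weight $e^{2s(\lambda-c)}$ multiplying the geometric reflection $\varrho_c$---i.e.\ the action of $E_s\varrho_c E_{-s}$---while the support constraint $\bar\chi_c$ arises because reflection across $c$ sends $\{\lambda<c\}$ to $\{\lambda>c\}$. The reflected contribution thus assembles into $K_\text{H}^N E_s\varrho_c E_{-s}\bar\chi_c K_\text{H}^N$, completing the identification.

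The hard part will be the algebraic bookkeeping in the previous step: tracking all Gaussian prefactors through the rescaling, the bridge limits, and the completion of the square, and verifying that the shift by $c-s$ together with the $e^{-r^2}$ prefactor combine into exactly the multiplier $e^{2s(\lambda-c)}$ required by $E_s\varrho_c E_{-s}$. The identities $c\pm s = re^{\pm\alpha}$ and $c^2 - s^2 = r^2$ encode the necessary arithmetic. Two consistency checks support this identification: at $\alpha = 0$ (i.e.\ $p = 1/2$) one has $s=0$ and the formula reduces to $\det(I - K_\text{H}^N(\chi_c + \varrho_c\bar\chi_c)K_\text{H}^N)$, the familiar symmetric-reflection formula for a flat barrier; and letting $p\to 1$ one expects to recover the LOE formula for $\mathcal M_N(1)$ of \cite{nguyen2017non}.
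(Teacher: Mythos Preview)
Your approach is sound and yields a genuinely different proof from the paper's. The paper does not work in the bridge picture at all: it first maps the bridges to stationary GUE Dyson Brownian motion via $s\mapsto\frac12\log\frac{s}{1-s}$, under which the flat barrier $\tilde r$ on $[0,p]$ becomes the curved barrier $r\cosh(t)$ on $(-\infty,\alpha]$, and then quotes an existing Fredholm determinant formula from \cite{borodin2015multiplicative} for the top Dyson path to stay below a curve, together with the explicit decomposition of the associated boundary-value operator $\Theta^r_{L,\alpha}$ worked out in \cite{nguyen2017non}. The only new computation there is evaluating $e^{(\alpha+L)D}K_\text{H}^N R^r_{L,\alpha}$ via the contour-integral representation of $\varphi_n$ and a Gaussian integral, plus a trace-norm limit $L\to\infty$. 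Your route---Karlin--McGregor for absorbed Brownian motion on $[0,p]$, free on $[p,1]$, bridge limit, Andr\'eief---is more elementary and self-contained, trading the black-box citations for explicit Gaussian bookkeeping.

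One point to watch in carrying this out: the bridge limit produces $H_{i-1}(z/\sqrt{2p})$ and $H_{i-1}(z/\sqrt{2(1-p)})$ in the two determinants, whose arguments become $\lambda\sqrt{1-p}$ and $\lambda\sqrt{p}$ after your rescaling, not $\lambda$. To land exactly on $\varphi_n(\lambda)$ you need row and column operations replacing these by $H_{i-1}(\lambda)$, and you must verify that the \emph{same} row operation applied to the reflected piece yields $H_{i-1}((2\tilde r-z)/\sqrt{2p(1-p)})=H_{i-1}(2c-\lambda)$. It does, and then the combined exponent $r^2/p-2(c-s)\lambda+\lambda^2$ you obtain matches the target $(2c-\lambda)^2/2+\lambda^2/2-2s(c-\lambda)$ via the identity $2c(c-s)=r^2/p$. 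Your heuristic of reflecting the full Gaussian $e^{-\lambda^2}$ and multiplying by $e^{2s(\lambda-c)}$ is not literally what happens (only the $\varphi_j$ factor carries the reflection), but the honest computation confirms the identification.
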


For the definition and properties of the Fredholm determinant we refer the reader to \cite{simon2005trace} or \cite[Section 2]{quastel2014airy}.

\begin{proof}
It turns out to be convenient to express $\mathcal M_N(p)$ in terms of the stationary (GUE) Dyson Brownian motion, for which formulas are cleaner.
This is the process which describes the evolution of the eigenvalues of an $N\times N$ matrix whose entries evolve as independent stationary Ornstein-Uhlenbeck processes $dX_t=-X_tdt+\sigma dB_t$ where $B$ is a standard Brownian motion and $\sigma=1/\sqrt{2}$ for off-diagonal entries, $\sigma=1$ on the diagonal.
For each fixed $t$, the ordered eigenvalues $\lambda_1(t)\leq\dotsm\leq\lambda_N(t)$ are distributed, up to scaling, as the eigenvalues of an $N\times N$ GUE matrix, and if we let this stationary process be defined for $t\in\RR$, then one has (see e.g. \cite{tracy2007nonintersecting})
\begin{equation}
(B_i(s))_{i=1,\dots,N} \stackrel{\uptext{dist}}{=} \Big(\sqrt{2s(1-s)}\lambda_i\big(\tfrac{1}{2}\log\! \big(\tfrac{s}{1-s}\big)\big)\Big)_{i=1,\dots N}\label{eq:Btolambda}
\end{equation}
as processes defined for $s\in [0,1]$.
Thus, applying the change of variables $s\to \frac{e^{2t}}{e^{2t}+1}$ we have
$\sqrt{2}\mathcal M_N(p)= \sup_{t\in (-\infty, \alpha]}\frac{\lambda_N(t)}{\cosh (t)}$
with $\alpha$ as in the statement of the result.
Hence 
\begin{equation}
\PP(\sqrt{2}\mathcal M_N(p)\leq r)=\PP(\lambda_N(t)\leq r\cosh(t)~\forall\tts t\leq\alpha).\label{eq:PPMNlambdaN}
\end{equation}
We will focus now on the random variable on the right hand side.

Let $D$ denote the operator $Df(x)=-\frac12(f''(x)-(x^2-1)f(x))$.
The Hermite functions $\varphi_n$ are eigenfunctions for $D$, $D\varphi_n=n \varphi_n$ for $n\geq0$, so that that $K_\text{H}^N$ is the projection operator onto the space span$\{\varphi_0,\dotsc,\varphi_{N-1}\}$ associated to the first $N$ eigenvalues of $D$.
In particular, $e^{tD}K_\text{H}^N$ is well defined for all $t$, with integral kernel is given by 
\begin{equation}\label{eq:etDKN}
e^{t{\sf D}}K_\text{H}^N(x,y)=\sum_{n=0}^{N-1}e^{tn}\varphi_n(x) \varphi_n(y).\end{equation}
It was shown in \cite{borodin2015multiplicative} that
\begin{equation}\label{eq:cont_stat_sdbm}
\begin{aligned}
\mathbb P\left(\lambda_N(t)\leq r\cosh(t)~\forall\tts t\in [-L,\alpha]\right)&= \det (I - K_\text{H}^N +\Theta_{L,\alpha}^r e^{(\alpha+L)D}K_\text{H}^N)\\
&= \det (I - K_\text{H}^N +e^{(\alpha+L)D}K_\text{H}^N\Theta_{L,\alpha}^r K_\text{H}^N)
\end{aligned}
\end{equation}
where $\Theta_{L,\alpha}^r$ is the solution operator for a certain boundary value problem involving $D$ (see \cite[Prop. 2.1]{nguyen2017non}), and where in the second equality we used the facts that $e^{sD}K_\text{H}^N=K_\text{H}^Ne^{sD}K_\text{H}^N$ and $K_\text{H}^N=e^{sD}K_\text{H}^Ne^{-sD}K_\text{H}^N$ together with the cyclic property of the determinant.
We use now the decomposition for $\Theta_{L,\alpha}^r$ given in \cite{nguyen2017non}: 
\[\Theta_{L,\alpha}^r = e^{-(\alpha+L)D}-e^{-(\alpha+L)D}\chi_{r\cosh (\alpha)} - R^r_{L,\alpha}\bar\chi_{r\cosh (\alpha)} - \Omega_{L,\alpha}\]
with $\Omega_{L,\alpha} = \chi_{r\cosh(L)}\left(e^{-(\alpha+L)D}-R_{L,\alpha}^r\right)\bar\chi_{r\cosh(\alpha)}$ and $R_{L,\alpha}^r$ an operator defined on $L^2(\RR)$ through its kernel defined for $x,y\in \RR$ (after some rearranging) as
\[R_{L,\alpha}^r(x,y)=(\pi(e^{2\alpha}-e^{-2L}))^{-1/2}e^{\alpha-u_1x^2+u_2x+u_3},\]
with
\[u_1=\tfrac{1+e^{2\alpha+2L}}{2(e^{2\alpha+2L}-1)},\:u_2=\tfrac{2e^{L}(r+e^{2\alpha}r-e^{a}y)}{e^{2\alpha+2L}-1},\:u_3=-\tfrac{2(1+e^{2\alpha})(1+e^{2L})r^2-4e^{\alpha}(1+e^{2L})ry+(1+e^{2\alpha+2L})y^2}{2(e^{2\alpha+2L}-1)}.\]
Using this, the determinant in \eqref{eq:cont_stat_sdbm} is equal to 
\[\det \left( I - K_\text{H}^N \chi_{r\cosh (\alpha)} K_\text{H}^N - e^{(\alpha+L)D}K_\text{H}^NR_{L,\alpha}^r\bar\chi_{r\cosh (\alpha)}K_\text{H}^N - e^{(\alpha +L)D}K_\text{H}^N\Omega_{L,\alpha}K_\text{H}^N\right).\]
$\Omega_{L,\alpha}$ is to be thought of as an error term, and in fact essentially the same proof as that of \cite[Lem. 2.3]{nguyen2017non}, shows that $e^{(\alpha+L)D}K_\text{H}^N\Omega_{L,\alpha}K_\text{H}^N\xrightarrow[L\to\infty]{}0$ in trace norm.
The Fredholm determinant is continuous with respect to the trace norm, so this together with \eqref{eq:PPMNlambdaN} and \eqref{eq:cont_stat_sdbm} shows that $\PP(\sqrt{2}\mathcal M_N(p)\leq r)=\lim_{L\to\infty}\mathbb P\left(\lambda_N(t)\leq r\cosh(t)~\forall\tts t\in [-L,\alpha]\right)$ equals
\[\det\!\left( I - K_\text{H}^N \chi_{r\cosh (\alpha)} K_\text{H}^N - e^{(\alpha+L)D}K_\text{H}^NR_{L,\alpha}^r\bar\chi_{r\cosh (\alpha)}K_\text{H}^N \right).\]
We will see now that the last kernel inside this last determinant does not depend on $L$, and equals the one appearing in the statement of the proposition.
%
%
%
The contour integral representation of the Hermite function $\varphi_n(x)=(2^nn!\sqrt{\pi})^{-\frac{1}{2}}e^{-x^2/2}\frac{n!}{2\pi\I}\oint dw \frac{e^{2wx-w^2}}{w^{n+1}}$ (the integral is over a simple contour around the origin) together with \eqref{eq:etDKN} imply that $e^{(\alpha+L)D}K_\text{H}^NR_{L,\alpha}^r(x,y) = \int_{\RR}dz \sum_{n=0}^{N-1}e^{(\alpha+L)n}\varphi_n(x)\varphi_n(z)R_{L,\alpha}^{r}(z,y)=\frac{n!}{2\pi \I}\oint dw \,\frac{e^{-w^2}}{w^{n+1}}\int_{\RR}dz\, e^{-z^2/2+2w z-u_1z^2+u_2z+u_3}$.
The $z$ integral is a Gaussian integral, and computing it, changing variables $t\longmapsto te^{(\alpha+L)}$ and regrouping terms in the exponent we get that $e^{(\alpha+L)D}K_\text{H}^NR_{L,\alpha}^r(x,y)$ equals 
\[\sum_{n=0}^{N-1} \varphi_n(x) \big[(2^nn!\sqrt{\pi})^{-\frac{1}{2}}e^{-(e^{-\alpha}r+e^{\alpha}r-y)^2/2}\frac{n!}{2\pi i} \oint dt \frac{e^{-t^2 +2t(e^{-\alpha}r+e^{\alpha}r-y)}}{t^{n+1}} \big] e^{r^2 \sinh(2\alpha)-2r\sinh(\alpha)y}.\]
The term inside the brackets is just $\varphi_n(e^{-\alpha}r+e^{\alpha}r-y)=\varphi_n(2r\cosh(\alpha)-y)$ and therefore $e^{(\alpha+L)D}K_\text{H}^NR_{L,\alpha}^r=K_\text{H}^Ne^{r\sinh(\alpha)\xi}\varrho_{r\cosh(\alpha)}e^{-r\sinh(\alpha)\xi}$, whence the result follows.
\end{proof}

\begin{corollary}\label{cor:maximum_height_0}
	\begin{equation}
	\label{eq:det_formula_0}
	\lim_{p\to0}\PP(\mathcal{M}_N(p) \leq \sqrt{2p}\tts r) = \det\!\left(I-K_\text{H}^N\chi_{r}K_\text{H}^N - K_\text{H}^NE_{-r}\varrho_{r}E_{r}\bar\chi_{r}K_\text{H}^N\right).
	\end{equation}
\end{corollary}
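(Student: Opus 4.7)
The plan is to apply Proposition~\ref{prop:maximum_height} to $\sqrt{2}\mathcal{M}_N(p)\le 2\sqrt{p}\,r$, compute what the arguments $s\cosh(\alpha)$ and $s\sinh(\alpha)$ become when $s=2\sqrt{p}\,r$, pass to the limit $p\to 0$ inside the Fredholm determinant, and observe that the limiting arguments are exactly those appearing in \eqref{eq:det_formula_0}.

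First I would substitute $s=2\sqrt{p}\,r$ in \eqref{eq:det_formula}, so that $\PP(\mathcal{M}_N(p)\le \sqrt{2p}\,r)=\PP(\sqrt{2}\mathcal{M}_N(p)\le s)$. From $\alpha=\tfrac12\log(p/(1-p))$ one gets $e^{\pm\alpha}=\sqrt{p/(1-p)}^{\pm 1}$, and a direct computation gives
\begin{equation}
\cosh(\alpha)=\frac{1}{2\sqrt{p(1-p)}},\qquad \sinh(\alpha)=\frac{2p-1}{2\sqrt{p(1-p)}},
\end{equation}
so that $s\cosh(\alpha)=r/\sqrt{1-p}$ and $s\sinh(\alpha)=(2p-1)r/\sqrt{1-p}$. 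As $p\to 0$, these converge to $r$ and $-r$ respectively, which are precisely the shifts appearing in the right hand side of \eqref{eq:det_formula_0}.

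The only remaining step is to justify interchanging the limit $p\to 0$ with the Fredholm determinant. This is where essentially all the technical work sits. I would argue trace-norm convergence of the two summands inside the determinant of \eqref{eq:det_formula}, using that $K_\text{H}^N$ is a finite-rank projection (rank $N$) onto $\mathrm{span}\{\varphi_0,\dotsc,\varphi_{N-1}\}$. Because of this, every operator appearing is a finite sum of rank-one operators of the form $\varphi_i\otimes(\varrho_{s\cosh(\alpha)}E_{-s\sinh(\alpha)}\bar\chi_{s\cosh(\alpha)}\varphi_j)$ (and similarly for the simpler term with $\chi_{s\cosh(\alpha)}$). For rank-one operators $f\otimes g$, the trace norm equals $\|f\|_2\|g\|_2$. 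Since $\varphi_n$ is a Schwartz function, the parameter-dependent vectors $\varrho_{s\cosh(\alpha)}E_{-s\sinh(\alpha)}\bar\chi_{s\cosh(\alpha)}\varphi_j$ depend continuously in $L^2(\RR)$ on the parameters $s\cosh(\alpha)$, $s\sinh(\alpha)$ (the multiplication by $e^{-s\sinh(\alpha)x}$ is benign because it only gets applied after the truncation $\bar\chi_{s\cosh(\alpha)}$ restricts $x\le s\cosh(\alpha)$, and then $\varrho_{s\cosh(\alpha)}$ brings us back to the Schwartz decay of the Hermite functions). Thus each of the finitely many rank-one operators converges in trace norm as $p\to 0$, hence so does their sum. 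By continuity of the Fredholm determinant with respect to the trace norm, the limit can be passed inside $\det$, yielding \eqref{eq:det_formula_0}.

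The main obstacle is the last step: verifying that the $L^2$-dependence of the vectors $\varrho_{s\cosh(\alpha)}E_{-s\sinh(\alpha)}\bar\chi_{s\cosh(\alpha)}\varphi_j$ on $p$ is continuous, which requires carefully combining the restriction $\bar\chi_{s\cosh(\alpha)}$ with the exponential multiplier $E_{-s\sinh(\alpha)}$ and the reflection $\varrho_{s\cosh(\alpha)}$ so that no growth of the exponential survives. This is straightforward once written out because, after reflection, the surviving region is $x\ge s\cosh(\alpha)$ where $\varphi_j$ decays super-exponentially, but it is the place where care is needed.
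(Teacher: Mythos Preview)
Your proposal is correct and follows essentially the same route as the paper: substitute $r\mapsto 2\sqrt{p}\,r$ in Proposition~\ref{prop:maximum_height}, note that $2\sqrt{p}\cosh(\alpha)\to 1$ and $2\sqrt{p}\sinh(\alpha)\to -1$ as $p\to 0$, and then justify passing the limit inside the Fredholm determinant via trace-norm convergence. The paper actually omits the details of the trace-norm step entirely (referring instead to an appendix of \cite{nguyen2017non}), whereas you give a concrete argument exploiting the finite-rank structure of $K_\text{H}^N$; one small slip is that in your description of the rank-one pieces you drop the outer multiplier $E_{s\sinh(\alpha)}$ on the left, but this does not affect the validity of the argument.
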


\begin{proof}
We need to compute the limit of the right hand side of \eqref{eq:det_formula} with $r$ replaced by $2\sqrt{p}\tts r$.
Since for $\alpha=\frac12\log(\frac{p}{1-p})$ one has $2\sqrt{p}\cosh(\alpha)\longrightarrow1$ and $2\sqrt{p}\sinh(\alpha)\longrightarrow-1$, it is easy to see that the kernel inside the resulting Fredholm determinant converges pointwise to the kernel appearing on the right hand side of \eqref{eq:det_formula_0}.
In order to upgrade this to convergence of the Fredholm determinant itself we need to show that the convergence holds in trace norm.
The proof of this is standard, and can be done by adapting the arguments used 	in \cite[Appdx. B]{nguyen2017non}; the present setting is simpler, and we omit the details.
\end{proof}

\section{Proof of Theorem \ref{thm:maximum_height}}

Recall the definition of the Laguerre Unitary Ensemble in \eqref{eq:laguerre-ensemble} (with $\beta=2$).
By standard methods in RMT and determinantal point processes (see e.g. \cite{johansson2005random,forrester2010log}), the distribution $F_{\text{LUE,m}}^{(a)}$ of the righmost charge $\lambda_m$ in the size $m$ generalized LUE can be expressed as a Fredholm determinant:
\begin{equation}\label{eq:CDFma}
F_{\text{LUE},m}^{(a)}(r)=\det(I-K_\text{L}^{m,(a)}\chi_{r}K_\text{L}^{m,(a)}),
\end{equation}
where the \emph{Laguerre kernel} $K_\text{L}^{m,(a)}$ is defined as
\[
K_\text{L}^{m,(a)}=\sum_{k=0}^{m-1}\psi_k^{(a)}(x)\psi_k^{(a)}(y),
\]
for $\psi_k^{(a)}$ the \emph{generalized Laguerre functions} which are defined as $\psi_k^{(a)}(x)=x^{a/2}e^{-x/2}L_k^{(a)}(x)\mathds{1}_{x\geq0}$, with $L^{(a)}_k$ the degree $k$ generalized Laguerre polynomial satisfying $\int_0^{\infty}dx\,L_m^{(a)}(x)L_n^{(a)}(x)x^{a}e^{-x}=\frac{\Gamma(n+a+1)}{n!}\delta_{mn}$, i.e. the orthogonal polynomials with respect to the weight $x^ae^{-x}$,  normalized so that $\|\psi_k\|_2=1$.

Let
\begin{equation}
U_N(r)=\det\!\left(I-K_\text{H}^N\chi_{r}K_\text{H}^N - K_\text{H}^NE_{-r}\varrho_{r}E_{r}\bar\chi_{r}K_\text{H}^N\right),\label{eq:defUN}
\end{equation}
which is the right hand side of \eqref{eq:det_formula_0}.
Our goal is to prove:

\begin{prop}\label{prop:CDF}
For every $N\in \mathbb N$ and $r\geq0$,
\begin{equation}
U_N(r)=\det\!\big(I-K_\text{L}^{\lfloor(N+1)/2\rfloor,\frac12(-1)^N}\chi_{r^2}\ts K_\text{L}^{\lfloor(N+1)/2\rfloor,\frac12(-1)^N}\big).\label{eq:CDF}
\end{equation}
\end{prop}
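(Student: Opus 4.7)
The plan is to reduce the Fredholm determinant $U_N(r)$ to a finite determinant of size $m=\lfloor(N+1)/2\rfloor$ by exploiting the rank structure forced by the reflection about $r$, and then identify the reduced matrix with the truncated Laguerre kernel via the change of variable $u=(w-r)^2$. First, setting $\Omega_r:=\chi_r+E_{-r}\varrho_rE_r\bar\chi_r$ and writing $\varphi_n=h_ne^{-x^2/2}$ for the orthonormal Hermite polynomials $h_n$, since $K_\text{H}^N$ is the orthogonal projection onto $\mathrm{span}(\varphi_0,\ldots,\varphi_{N-1})$ one has
\[U_N(r)=\det(I_N-A),\qquad A_{nm}=\int_r^\infty h_n(w)\bigl[h_m(w)+h_m(2r-w)\bigr]e^{-w^2}\tts dw.\]

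Next, the bracketed factor is invariant under $w\mapsto 2r-w$, and the Hermite shift identity $H_n(r+s)=\sum_k\binom{n}{k}(2s)^{n-k}H_k(r)$ then shows it is a polynomial in $(w-r)^2$ of degree $\lfloor m/2\rfloor$. Hence every column of $A$ lies in the $m$-dimensional span of the $N$-vectors $\bigl(\int_r^\infty h_n(w)(w-r)^{2j}e^{-w^2}dw\bigr)_{n=0}^{N-1}$ for $j=0,\ldots,m-1$, yielding a factorization $A=CD^T$ with $C,D$ of size $N\times m$. Sylvester's identity then reduces the determinant to the correct size: $U_N(r)=\det(I_m-D^TC)$. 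To identify $D^TC$ with the truncated Laguerre matrix, I would substitute $u=(w-r)^2$: the region $w>r$ maps to $u>0$, the Jacobian contributes $du/(2\sqrt u)$, and $e^{-w^2}=e^{-r^2-2r\sqrt u-u}$. Using the Hermite-Laguerre correspondence $\varphi_{2k}(x)=(-1)^k\sqrt{x}\tts\psi_k^{(-1/2)}(x^2)$ and $\varphi_{2k+1}(x)=(-1)^k\sqrt{x}\tts\psi_k^{(1/2)}(x^2)$ valid for $x>0$, the resulting expression matches $\psi_j^{(a)}(u)\psi_k^{(a)}(u)$ integrated over $u>r^2$, with $a=\frac{1}{2}(-1)^N$ determined by the parity of the top Hermite index $N-1$. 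Laguerre orthonormality then gives $I_m-D^TC=\bigl(\int_0^{r^2}\psi_j^{(a)}(u)\psi_k^{(a)}(u)du\bigr)_{j,k}$, whose determinant is the right-hand side of \eqref{eq:CDF}.

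The main obstacle is this last identification: matching $D^TC$ entrywise with the truncated Laguerre matrix (not merely agreeing on determinants) requires careful bookkeeping of the Hermite shift coefficients against the Laguerre normalization constants, together with the Jacobian factors from $u=(w-r)^2$. In particular, the parity-dependent choice $a=\frac{1}{2}(-1)^N$ emerges naturally only after this bookkeeping, reflecting whether $\varphi_{N-1}$ is even or odd under $x\mapsto -x$. An alternative route, suggested by the discussion following Theorem \ref{thm:maximum_height} concerning the Antisymmetric Gaussian Ensemble, would be to expand $\det(I_N-A)$ via Andréief's identity and match the resulting $N$-fold integral directly with the joint density of the positive eigenvalues of an $(N+1)\times(N+1)$ Antisymmetric Gaussian matrix, then pass to the generalized LUE via $\lambda\mapsto\lambda^2$.
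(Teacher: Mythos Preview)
Your reduction to an $N\times N$ matrix and the rank observation via the symmetry of $h_m(w)+h_m(2r-w)$ under $w\mapsto 2r-w$ are correct, and parallel the paper's approach up to that point. The gap is in the identification step. The substitution $u=(w-r)^2$ sends $(r,\infty)$ onto $(0,\infty)$, not $(r^2,\infty)$, and turns $e^{-w^2}$ into $e^{-r^2-2r\sqrt u-u}$, which is not the Laguerre weight; more fundamentally, the Hermite--Laguerre correspondence $\varphi_{2k}(x)\propto\sqrt{x}\,\psi_k^{(-1/2)}(x^2)$ encodes the parity decomposition of Hermite functions about the \emph{origin} and does not survive the shift $w\mapsto w-r$. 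For your particular factorization $A=CD^T$ built from the monomials $(w-r)^{2j}$ there is no reason $D^TC$ should equal the truncated Laguerre matrix entrywise---at best it is similar to it, and establishing that similarity is precisely the content of the proposition. What you label ``careful bookkeeping'' is the heart of the argument, not a residual detail.

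The paper handles this by constructing explicit $m\times N$ and $N\times m$ matrices $S$ and $T$, whose entries are Hermite polynomials evaluated at $r$ and at $\I r$, and then verifying algebraically that $2TS=F$, $ST=I$, and that the compressed matrix $2SQT$ equals the Laguerre matrix entrywise. The substitution that actually links the two sides is $u=w^2$ applied on the Laguerre side, which identifies the target matrix with (twice) the even--even or odd--odd block of $Q$; the reflected term is absorbed into the upper-triangular matrix $F$, and the real work is a sequence of Hermite summation identities (Lemmas~\ref{lem:first lemma} and~\ref{lem:second lemma}) needed to check those three matrix equations. Your alternative Andr\'eief route through the Antisymmetric Gaussian Ensemble would face the same obstruction: the shift by $r$ still has to be reconciled with the parity structure of the Hermite functions, and no change of variables does that for free.
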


Theorem \ref{thm:maximum_height} follows directly from this and Corollary \ref{cor:maximum_height_0} after replacing $r$ by $\sqrt{2}\tts r$.
We turn now to the proof of the proposition.

The kernel $K_\text{H}^N$ has finite rank, so the above Fredholm determinant can be expressed as the determinant of a finite matrix.
In order to do so, let $J_1:\ell^2(\{0,\dots,N-1\})\to L^2(\mathbb{R})$ and $J_2:L^2(\mathbb{R})\to \ell^2(\{0,\dots,N-1\})$ be the kernels given by
\[J_1(x,n)=\varphi_n(x),\qquad\text{and}\qquad J_2(n,y)=\varphi_n(y),\]
where $\varphi_n(x)=C_ne^{-x^2/2}H_n(x)$ are the Hermite functions introduced above.
Here $H_n$ are the standard Hermite polynomials and $C_n=(\sqrt{\pi}2^nn!)^{-\nicefrac{1}{2}}$.
We have $J_1J_2=K_\text{H}^N$ while $J_2J_1$ equals the identity in $\ell^2(\{0,\dots,N-1\})$, so applying the cyclic property of the Fredholm determinant to the right hand side of \eqref{eq:defUN} we deduce that $U_N(r)=\det(I-J_2P_rJ_1-J_2M_{-r}\varrho_r M_r\bar\chi_rJ_1)$, which is now a Fredholm determinant on $\ell^2(\{0,\dots,N-1\})$.
As such, it can be rewritten as the determinant of an $N\times N$ matrix as desired:
\[U_N(r)=\det(I-M)\]
for $M$ the matrix indexed by $\{0,\dots N-1\}$ and given by
\begin{align*}
M_{j,k}&=\big(J_2\chi_rJ_1+J_2M_{-r}\varrho_rM_r\bar\chi_rJ_1\big)(j,k)
=\int_{r}^{\infty}dz\, \varphi_j(z) \! \left[\varphi_k(z)+\varphi_k(2r-z) e^{2r^2-2rz}\right]\\
&=C_jC_{k}\int_{0}^{\infty}dz\,H_j(r+z)\! \left[H_{k}(r+z)+H_{k}(r-z)\right]e^{-(r+z)^2}\\
&=C_jC_{k}\left[\sum_{l=0}^{k}\sbinom{k}{l}(4r)^{k-l}(-1)^l \int_{r}^{\infty}dz\,H_j(z) H_{l}(z)e^{-z^2}+\int_{r}^{\infty}dz\,H_j(z) H_{k}(z)e^{-z^2}\right],
\end{align*}
where in the second line we used the change of variable $z\longmapsto z+r$ and in the third one the identity 
$H_n(x+y)=\sum_{k=0}^{n}\binom{n}{k}H_k(x)(2y)^{n-k}$ \cite{huang2000introduction}.
If we now define the matrices
\begin{equation}
\label{eq:matrix_F}
F_{j,k}=\sbinom{k}{j}(4r)^{k-j}(-1)^j\mathds{1}_{j\leq k}+\delta_{j,k},
\qquad Q_{j,k}=C_j^2\int_{r}^{\infty}dz\,H_j(z) H_{k}(z)e^{-z^2},
\end{equation}
then using the cyclic property of the determinant we obtain 
\begin{equation}
U_N(r)=\det(I-QF).
\end{equation}

Now let $G_N(r)$ denote the right hand side of \eqref{eq:CDF}, so that our goal is to prove $U_N=G_N$. In everything that follows we let $b=1$ if $N$ is odd and $b=2$ if $N$ is even. The Laguerre kernel appearing inside the Fredholm determinant in \eqref{eq:CDF} can be expressed in terms of Laguerre polynomials using the relation (see \cite{andrews1999special})
\[
L_n^{(\frac{1}{2}(-1)^b)}(u)=
\frac{(-1)^n2^{-2n-b+1}}{n!} u^{-(1+(-1)^b)/4} H_{2n+b-1}(u^{1/2}),
\]
computing as we just did above using the cyclic property of the determinant leads to
\[G_N(r)=\det(I-A),\]
where the matrix $A$ is indexed by $\{0,\dots, \lfloor \frac{N-1}{2}\rfloor\}$ and is given by
$
A_{j,k}=2Q_{2j+b-1,2k+b-1}.
$
Note that the matrices $QF$ and $A$ appearing within the determinants for $U_N$ and $G_N$ have different dimensions.
In principle, this makes it hard to compare the two determinants.
The key is that $M$, which is of size $N$, actually has rank $\lfloor\frac{N+1}{2}\rfloor$, which is the size of $A$.
In order to see, and use this, we first introduce the matrices 
\begin{equation}\label{definicion de S}
	S_{j,t}=
	\frac{t!}{2^{2j}(t-N+2j+b)!}
	H_{t-N+2j+b}(r),\quad j=0,\dotsc,\tfrac12(N-b),\, t=0,\dotsc,N-1,
\end{equation}
\begin{equation}\label{definicion de T}
T_{u,k}=
\frac{(-1)^{\frac{N-b}{2}+k}(-\I)^{u}\ts 2^{2k} }{u!(N-2k-b-u)!}
H_{N-2k-b-u}(\I r),\quad u=0,\dotsc, N-1,\, k=0,\dotsc,\tfrac12(N-b).
\end{equation}
Here, and in everything that follows, we use the convention 
\[H_m(x)=0\quad\text{and}\quad\frac{1}{m!}=0\qquad\forall\,m<0.\]
We note that $H_n(x)$ has the same parity as $n$, and thus $\I^nH_n(\I r)$ is real, which shows that $T$ is a real matrix.
Proposition \ref{prop:CDF} will follow from the following two results:

\begin{prop}\label{propiedades_de_T_y_S}
The matrices $S$ and $T$ satisfy:
\begin{enumerate}[label=\uptext{(\arabic*)}]
    \item $2TS=F$ (with $F$ as defined in \eqref{eq:matrix_F}).
    \item $ST=I$, the identity matrix of size $\lfloor(N+1)/2\rfloor$.
\end{enumerate}
\end{prop}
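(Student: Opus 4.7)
My plan is to verify both identities by direct matrix calculation: write out $(ST)_{j,k}$ and $(2TS)_{u,t}$, reduce each to a convolution of Hermite polynomials evaluated at $r$ and $ir$, and evaluate the convolution via the exponential generating function $\sum_{n\ge0} H_n(x)\tfrac{t^n}{n!} = e^{2xt-t^2}$. The key identity driving both parts is
\[
\sum_{s+q=n}\frac{(-i)^s H_s(r)H_q(ir)}{s!\,q!} = \delta_{n,0},
\]
which follows from $e^{-2irt+t^2}\cdot e^{2irt-t^2}=1$, together with its parity-split refinement
\[
\sum_{p\text{ even}} i^p H_p(ir)\tfrac{t^p}{p!} = e^{t^2}\cosh(2rt),\qquad \sum_{p\text{ odd}} i^p H_p(ir)\tfrac{t^p}{p!} = -e^{t^2}\sinh(2rt),
\]
obtained by taking the half-sum and half-difference of the generating function at $\pm t$.

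For part (2), I would substitute the definitions into $(ST)_{j,k}=\sum_u S_{j,u}T_{u,k}$; the powers $2^{2j}$ and $2^{2k}$ cancel, and after the substitution $s=u-N+2j+b$ the sum takes the form
\[
(ST)_{j,k} = (-1)^{(N-b)/2+k}(-i)^{N-2j-b}\sum_{s=0}^{2(j-k)}\frac{(-i)^s H_s(r)H_{2(j-k)-s}(ir)}{s!\,(2(j-k)-s)!}.
\]
When $j<k$ the convention $1/m!=0$ for $m<0$ kills every term; for $j\ge k$ the boxed identity yields $\delta_{j,k}$ up to a prefactor which on the diagonal simplifies to $(-1)^{(N-b)/2}(-i)^{N-b}=1$, using that $N-b$ is even so $(-i)^{N-b}=(-1)^{(N-b)/2}$.

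For part (1), the analogous expansion, after the substitution $p=N-b-u-2k$ and the rewrite $(-1)^k=(-1)^{(N-b)/2}\,i^{u+p}$ (valid because $u$ and $p$ share parity, as $p+u=N-b-2k$), together with $(-i)^u i^u=1$, gives
\[
(2TS)_{u,t}=\frac{2\,t!}{u!}\sum_{\substack{p+q=t-u\\ p\equiv u\,(\mathrm{mod}\,2)}}\frac{i^p H_p(ir)H_q(r)}{p!\,q!}.
\]
Splitting by the parity of $p$ and using the parity-split generating functions above multiplied by $\sum_q H_q(r)t^q/q!=e^{2rt-t^2}$ collapses the convolution to $\tfrac12(e^{4rt}\pm 1)$; extracting the coefficient of $t^{t-u}$ and multiplying by $2t!/u!$ recovers $(-1)^u\binom{t}{u}(4r)^{t-u}$ for $u<t$ together with $1+(-1)^u$ on the diagonal, which is exactly $F_{u,t}$.

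The main obstacle will be the sign bookkeeping: the factor $(-1)^{(N-b)/2+k}(-i)^u$ from $T$ only aligns with the generating-function identities after the rewrite $(-1)^k=(-1)^{(N-b)/2}i^{u+p}$. A secondary point is to verify that the cap $k\in\{0,\dotsc,(N-b)/2\}$, which imposes $p\le N-b-u$, never removes a term of the correct parity: for $b=1$ the bound is always slack, and for $b=2$ the only truncated index $p=N-1-u$ has the opposite parity to $u$ (since $N$ is even), so it does not appear in the parity-restricted sum anyway.
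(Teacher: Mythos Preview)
Your argument is correct and structurally parallel to the paper's: both reduce $(ST)_{j,k}$ and $(2TS)_{u,t}$ to convolutions of Hermite polynomials at $r$ and $ir$, then evaluate those convolutions. The difference is in how the convolution identities are established. The paper packages them as Lemma~\ref{lem:first lemma}(1),(2) and proves them via Roman's addition formula $\sum_s\binom{n}{s}H_s^{[\alpha]}(x)H_{n-s}^{[-\alpha]}(y)=(x+y)^n$ (for the parity-split identities) and via the product formula \eqref{producto dos polinomios de hermite} plus a binomial-theorem cancellation (for the $\delta_{m,0}$ identity). You instead read everything off the exponential generating function $\sum_n H_n(x)\tau^n/n!=e^{2x\tau-\tau^2}$: the identity $e^{-2ir\tau+\tau^2}\cdot e^{2ir\tau-\tau^2}=1$ gives your $\delta_{n,0}$ formula directly, and the parity split of $\sum_p i^pH_p(ir)\tau^p/p!=e^{-2r\tau+\tau^2}$ multiplied by $e^{2r\tau-\tau^2}$ collapses part~(1). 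This is more elementary and self-contained; the paper's route, on the other hand, ties into the same machinery (Lemma~\ref{lem:first lemma}) it reuses for Proposition~\ref{propiedad de 2SC^2QT=A}.

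Two small points to clean up. First, in part~(2) the powers $2^{-2j}$ from $S$ and $2^{2k}$ from $T$ do not cancel; the product carries a factor $2^{2(k-j)}$ which is missing from your displayed formula for $(ST)_{j,k}$. This is harmless since the sum vanishes for $j\neq k$ and the factor is $1$ on the diagonal, but keep it in the write-up. Second, in part~(1) the odd-$p$ product is $-e^{\tau^2}\sinh(2r\tau)\cdot e^{2r\tau-\tau^2}=-\tfrac12(e^{4r\tau}-1)$, not $+\tfrac12(e^{4r\tau}-1)$; this extra sign is exactly what produces the $(-1)^u$ you quote in the final answer, so make the sign explicit rather than writing ``$\tfrac12(e^{4r\tau}\pm1)$''. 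Your parity check on the cap $p\le N-b-u$ is correct.
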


Property (1) implies that $F$, and thus $QF$, have rank $\lfloor(N+1)/2\rfloor$ as claimed.

\begin{prop}\label{propiedad de 2SC^2QT=A}
$2SQT=A$.
\end{prop}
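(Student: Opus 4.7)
I would begin by deriving a closed form for $g_k(z):=\sum_u T_{u,k}H_u(z)$. The polynomials $\hat H_p(r):=i^pH_p(ir)$ are real (since $H_p$ has the parity of $p$) and satisfy $\sum_p\hat H_p(r)t^p/p!=e^{-2rt+t^2}$. Multiplying this with $\sum_u H_u(z)s^u/u!=e^{2zs-s^2}$ and specializing to $s=t$ produces the finite polynomial identity $\sum_{u+v=m}H_u(z)\hat H_v(r)/(u!v!)=(2(z-r))^m/m!$. Since $N-b$ is even, the phase factors in $T_{u,k}$ collapse via $(-1)^{(N-b)/2+k}(-i)^u=i^{m-u}$ with $m:=N-2k-b$, giving the clean formula
\[
g_k(z)=\frac{2^{N-b}(z-r)^{N-2k-b}}{(N-2k-b)!}.
\]
Consequently $(SQT)_{j,k}=\int_r^\infty f_j(z)g_k(z)e^{-z^2}dz$ with $f_j(z):=\sum_tS_{j,t}C_t^2H_t(z)$.

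Next, I would evaluate $\int_r^\infty H_t(z)(z-r)^me^{-z^2}dz$ by iterated integration by parts based on $(H_{t-1}(z)e^{-z^2})'=-H_t(z)e^{-z^2}$, which yields the reduction $\int_r^\infty H_t(z)(z-r)^me^{-z^2}dz=m\int_r^\infty H_{t-1}(z)(z-r)^{m-1}e^{-z^2}dz$ for $m,t\geq 1$. After $\min(t,m)$ iterations this equals $m!H_{t-m-1}(r)e^{-r^2}$ when $t>m$, $m!\tfrac{\sqrt\pi}{2}\operatorname{erfc}(r)$ when $t=m$, or $\tfrac{m!}{(m-t)!}J_{m-t}(r)$ when $t<m$, where $J_p(r):=\int_r^\infty(z-r)^pe^{-z^2}dz$ obeys the Hermite-like three-term recurrence $J_p=\tfrac{p-1}{2}J_{p-2}-rJ_{p-1}$. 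In parallel, the target $Q_{2j+b-1,2k+b-1}$ is computed directly from the Christoffel-Darboux identity $J_{a,b}=\tfrac{H_{b+1}(r)H_a(r)-H_{a+1}(r)H_b(r)}{2(a-b)}e^{-r^2}$ for $a\neq b$ (obtained by the same IBP argument) and its confluent analogue for $a=b$.

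Both sides then split into an $\operatorname{erfc}(r)$-contribution and an $e^{-r^2}\cdot(\text{polynomial in }r)$-contribution, which I would match separately. The $\operatorname{erfc}(r)$-coefficient in $(SQT)_{j,k}$ arises only from the ``$t=m$'' case above; using the key telescoping $(2^tt!)C_t^2=1/\sqrt\pi$ it sums to $\tfrac12(ST)_{j,k}=\tfrac12\delta_{j,k}$ by property~(2) of Proposition~\ref{propiedades_de_T_y_S}, matching exactly the $\tfrac12\delta_{j,k}$ coefficient of $Q_{2j+b-1,2k+b-1}$ coming from the confluent Christoffel-Darboux form. For the polynomial-in-$r$ part, the case $j\leq k$ involves only the ``$t\geq m$'' contributions and both sides reduce directly to $\tfrac{e^{-r^2}}{\sqrt\pi}\sum_{l=0}^{2j+b-1}\tfrac{H_l(r)H_{l+2(k-j)-1}(r)}{2^ll!}$. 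The main obstacle is the case $j>k$, where residual $J_p$-integrals from the ``$t<m$'' regime appear; verifying that these combine, via the three-term recurrence for $J_p$, with the Hermite boundary terms to recover the Christoffel-Darboux expression for $Q_{2j+b-1,2k+b-1}$ requires an elementary but careful combinatorial bookkeeping involving Hermite-polynomial values at $r$. Although the $\operatorname{erfc}$-cancellation is already guaranteed by $ST=I$, tracking the polynomial cancellation in this latter case is the most delicate step of the argument.
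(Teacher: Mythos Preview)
Your route is genuinely different from the paper's. The paper expands $(S\tilde QT)_{j,k}$ as a triple sum and evaluates it by case analysis on the range of $t$, repeatedly invoking Hermite/binomial identities (its Lemmas~7 and~8). Your observation that the generating-function identity collapses $\sum_u T_{u,k}H_u(z)$ to $2^{N-b}(z-r)^{N-2k-b}/(N-2k-b)!$ is correct and conceptually cleaner: it turns $(SQT)_{j,k}$ into a single integral computable by iterated integration by parts, and the paper's combinatorial lemmas are largely replaced by the telescoping recursion $I_{a,c}=H_a(r)H_{c-1}(r)e^{-r^2}+2a\,I_{a-1,c-1}$, which indeed gives both $Q_{2j+b-1,2k+b-1}$ and $(SQT)_{j,k}$ in the common form $\tfrac{e^{-r^2}}{\sqrt\pi}\sum_{s=0}^{2j+b-1}\tfrac{H_s(r)H_{s+2(k-j)-1}(r)}{2^ss!}$ when $j\le k$.

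There is, however, a genuine inconsistency in your treatment of $j>k$. You assert that ``the $\operatorname{erfc}(r)$-coefficient arises only from the $t=m$ case,'' but this is false: the integrals $J_p$ for $p\ge 1$ also carry $\operatorname{erfc}$ terms. From your three-term recurrence one finds $J_p=\tfrac{\sqrt\pi}{2^{p+1}}\hat H_p(r)\operatorname{erfc}(r)+e^{-r^2}\cdot(\text{polynomial})$, and the $t=m$ term alone contributes a nonzero multiple of $H_{2(j-k)}(r)\operatorname{erfc}(r)$ when $j>k$. The total $\operatorname{erfc}$-coefficient \emph{is} $\tfrac12\delta_{j,k}$, but only after including the $J_p$ contributions from $t<m$: the combined sum is $\tfrac{2^{2(k-j)}}{2}\sum_{u+v=2(j-k)}H_u(r)\hat H_v(r)/(u!\,v!)$, and your own generating-function identity specialized to $z=r$ forces this to vanish for $j\ne k$. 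This is precisely the mechanism behind $ST=I$, as you note parenthetically at the end---so the ingredients are present, but the argument as written contradicts itself and should be repaired. With that fix, the remaining polynomial matching for $j>k$ can be done by tracking the $e^{-r^2}$-part of $J_p$ through the same recurrence; it is the bookkeeping you anticipate, and it does close.
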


\begin{proof}[Proof of Proposition \ref{prop:CDF}]
\eqref{eq:CDF} is now a simple consequence of the above identities and the cyclic property of the determinant:
\[G_N(r)=\det(I-A)=\det(I-2SQT)=\det(I-2QTS)=\det(I-QF)=U_N(r).\qedhere\]
\end{proof}

We turn to the proofs of the two propositions.
They depend on the following two lemmas, whose proofs are deferred to the end of the section.

\begin{lem}\label{lem:first lemma} 
Let $m$ be a non-negative integer.
\begin{enumerate}[label=\uptext{(\arabic*)}]
\item\label{binomial hermite 1} 
If $m>0$ then
\begin{gather}
\label{eq:suma impar}
\quad\sum_{s\geq0}
\sbinom{m}{2s+1}
(-1)^s
H_{m-2s-1}(\I r)
H_{2s+1}(r)
=
\tfrac{1}{2}\I^{m-1}
(4r)^{m},\\
\label{eq:suma par}
\sum_{s\geq0}
\sbinom{m}{2s}
(-1)^s
H_{m-2s}(\I r)
H_{2s}(r)
=
\tfrac{1}{2}\I^{m}
(4r)^{m}.
\end{gather}
\item \label{binomial hermite 2}		
$\displaystyle
\sum_{t=0}^{m}
(-\I)^{t}
\sbinom{m}{t}
H_{t}(r)
H_{m-t}(\I r)
=
\delta_{m,0}.
$
\item \label{binomial hermite 3} For all integers $d$ and $m\geq0$ such that $d+m\geq0$,
\[
\sum_{u=0}^{m}
\sbinom{m}{u}
(-\I)^{u}
\,H_{m-u}(\I r)
H_{u+m+d}(r)
=\frac{\I^m2^m(m+d)!}{d!}H_d(r).
\]
\end{enumerate}
\end{lem}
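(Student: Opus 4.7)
My plan is to derive all three identities from the Hermite generating function $\sum_{n\geq 0}H_{n}(x)\tts t^{n}/n!=e^{2xt-t^{2}}$, treated as a formal power series in $t$.

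For parts \ref{binomial hermite 1} and \ref{binomial hermite 2}, I will form the Cauchy product of $\sum_{k\geq 0}H_{k}(r)(\omega t)^{k}/k!$ with $\sum_{j\geq 0}H_{j}(\I r)\tts t^{j}/j!$, obtaining
\[
\sum_{k=0}^{m}\binom{m}{k}\omega^{k}H_{k}(r)H_{m-k}(\I r)\;=\;m!\cdot[t^{m}]\,e^{2r(\omega+\I)\tts t-(\omega^{2}+1)\tts t^{2}}.
\]
Two specializations do all the work. Choosing $\omega=-\I$ makes the exponent vanish identically, so the right-hand side reduces to $\delta_{m,0}$, which is exactly \ref{binomial hermite 2}. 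Choosing $\omega=\I$ reduces the exponent to $4\I rt$, so the sum equals $(4\I r)^{m}=\I^{m}(4r)^{m}$. Splitting each of these two relations into its even-$k$ and odd-$k$ parts isolates the two sums appearing in \eqref{eq:suma par} and \eqref{eq:suma impar} (up to a factor of $\I$ in the odd case), and the two choices of $\omega$ yield a $2\times 2$ linear system whose unique solution for $m\geq 1$ is precisely the pair of formulas in part \ref{binomial hermite 1}.

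Part \ref{binomial hermite 3} requires the auxiliary identity
\[
\sum_{u\geq 0}\frac{H_{u+n}(r)}{u!}\tts z^{u}=(-1)^{n}H_{n}(z-r)\tts e^{2rz-z^{2}},
\]
obtained by differentiating the generating series $n$ times in $z$ and using $\tfrac{d^{n}}{dz^{n}}e^{2rz-z^{2}}=(-1)^{n}H_{n}(z-r)\tts e^{2rz-z^{2}}$, itself a direct consequence of Rodrigues' formula applied to $e^{-(z-r)^{2}}$. To invoke it, I write $H_{m-u}(\I r)=(m-u)!\cdot[z^{m-u}]e^{2\I rz-z^{2}}$ and convert the shift via $[z^{m-u}]f(z)=[z^{m}](z^{u}f(z))$, rewriting the left-hand side of part \ref{binomial hermite 3} as
\[
m!\cdot[z^{m}]\,\Bigl(\sum_{u\geq 0}\frac{(-\I z)^{u}}{u!}H_{u+m+d}(r)\Bigr)\tts e^{2\I rz-z^{2}}.
\]
Evaluating the inner sum via the auxiliary identity at $n=m+d$ with $z\mapsto-\I z$, the two Gaussian factors cancel exactly and the expression collapses to $m!\,(-1)^{m+d}\,[z^{m}]H_{m+d}(-\I z-r)$. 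The addition formula $H_{n}(x+y)=\sum_{j}\binom{n}{j}H_{j}(x)(2y)^{n-j}$ applied with $x=-r$ and $y=-\I z$ expresses $H_{m+d}(-r-\I z)$ as a polynomial in $z$, and only the term $j=d$ contributes to the $z^{m}$-coefficient; collecting signs and powers of $2$ and $\I$ yields the claimed $\I^{m}2^{m}(m+d)!\tts H_{d}(r)/d!$.

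The step that needs the most care is the passage from the finite sum $u\in\{0,\dots,m\}$ in part \ref{binomial hermite 3} to the formal series $\sum_{u\geq 0}$ used to invoke the auxiliary identity; this is legitimate because extending the range only adds terms supported on powers $z^{>m}$, which the coefficient extraction $[z^{m}]$ discards. Once this point is noted, the remainder is routine generating-function bookkeeping together with a single application of the addition formula.
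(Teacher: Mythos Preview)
Your proof is correct and follows a genuinely different, cleaner path than the paper's.

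For parts \ref{binomial hermite 1} and \ref{binomial hermite 2} the paper invokes the umbral-style identity $\sum_{s}\binom{n}{s}H_{s}^{[\alpha]}(x)H_{n-s}^{[-\alpha]}(y)=(x+y)^{n}$ from \cite{roman_1984}, specializes to $\alpha=\tfrac12$, and then adds and subtracts the resulting relations. Your direct Cauchy product of two generating series, specialized at $\omega=\pm\I$, gives the same $2\times 2$ system in one stroke; the two arguments are essentially equivalent, but yours is self-contained and avoids introducing the rescaled polynomials $H_{n}^{[\alpha]}$.

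For part \ref{binomial hermite 3} the approaches diverge more substantially. The paper expands $H_{m-u}(\I r)$ via the scaling identity \eqref{eq:gamma-identity}, multiplies out using the Hermite product formula \eqref{producto dos polinomios de hermite}, applies the binomial identity of Lemma~\ref{lem:second lemma}\ref{binomial 2} to collapse one sum, and then verifies the result by testing against every $H_{n}$ via orthogonality. Your route---differentiate the generating function $n$ times to obtain $\sum_{u}H_{u+n}(r)z^{u}/u!=(-1)^{n}H_{n}(z-r)e^{2rz-z^{2}}$, notice that the Gaussian factor cancels exactly against $e^{2\I rz-z^{2}}$, and read off one coefficient via the addition formula---is shorter and, notably, makes the paper's combinatorial Lemma~\ref{lem:second lemma}\ref{binomial 2} and the orthogonality argument unnecessary for this lemma. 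One small point worth making explicit in your write-up: when $d<0$ (allowed by the hypothesis $m+d\geq 0$), the polynomial $H_{m+d}(-\I z-r)$ has degree $m+d<m$ so $[z^{m}]$ vanishes, matching the right-hand side under the paper's convention $1/d!=0$, $H_{d}=0$ for $d<0$.
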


\begin{lem}\label{lem:second lemma} For all integers $m\geq0$, $t\geq 0$ and $d\geq m$:
\begin{enumerate}[label=\uptext{(\roman*)}]
\item \label{binomial 1} 
If $t\geq m+1$ then $\displaystyle
\sum_{l\geq 0}(-1)^l 
\sbinom{t}{l}
\sbinom{t-l-1}{m-l}=
(-1)^{m} .
$
\item\label{binomial 2} $\displaystyle
\sum_{u=0}^{k}(-1)^u\frac{(t+u)!}{u!(k-u)!(t+u-h)!}
=
(-1)^k\frac{h!}{k!}
\sbinom{t}{h-k}.
$
\end{enumerate}
\end{lem}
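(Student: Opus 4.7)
The plan is to recognize both identities as short, classical binomial manipulations; neither requires any deep combinatorial input.

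For part (i), the first step is to apply the upper-negation formula $\binom{n}{k}=(-1)^{k}\binom{k-n-1}{k}$ to the second binomial coefficient, obtaining $\binom{t-l-1}{m-l}=(-1)^{m-l}\binom{m-t}{m-l}$. Substituting this into the sum and factoring out $(-1)^{m}$ gives
\begin{equation*}
\sum_{l\geq0}(-1)^{l}\binom{t}{l}\binom{t-l-1}{m-l}
=(-1)^{m}\sum_{l=0}^{m}\binom{t}{l}\binom{m-t}{m-l},
\end{equation*}
and Vandermonde's identity evaluates the right-hand sum to $\binom{m}{m}=1$, yielding $(-1)^{m}$ as claimed. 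The hypothesis $t\geq m+1$ is used only to guarantee that each term on the left is interpreted in the unambiguous regime (no negative-factorial subtleties for $\binom{t-l-1}{m-l}$); a slightly more careful bookkeeping shows the identity is in fact true more generally, but the stated range is all that is required later.

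For part (ii), the first step is to factor $h!/k!$ out of the summand via the rewriting
\begin{equation*}
\frac{(t+u)!}{u!(k-u)!(t+u-h)!}=\frac{h!}{k!}\binom{k}{u}\binom{t+u}{h}.
\end{equation*}
The claim then reduces to showing
\begin{equation*}
\sum_{u=0}^{k}(-1)^{u}\binom{k}{u}\binom{t+u}{h}=(-1)^{k}\binom{t}{h-k}.
\end{equation*}
Up to the sign $(-1)^{k}$, the left-hand side is the $k$-th iterate of the forward-difference operator $\Delta f(t)=f(t+1)-f(t)$ applied to $\binom{t}{h}$, and the elementary recursion $\Delta\binom{t}{h}=\binom{t+1}{h}-\binom{t}{h}=\binom{t}{h-1}$ yields $\Delta^{k}\binom{t}{h}=\binom{t}{h-k}$ by induction on $k$.

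The main obstacle, such as it is, is bookkeeping at degenerate indices. In (ii), when $h<k$ the right-hand side vanishes under the paper's convention $1/m!=0$ for $m<0$; this is consistent with the fact that the $k$-th forward difference annihilates polynomials of degree strictly less than $k$, so that the left-hand side also vanishes. Analogous edge-case checks are needed in (i) when $t-m-1$ is small. Once these conventions are tracked, each identity follows from a few lines of algebra using the two classical tools above: upper negation plus Vandermonde for (i), and iterated forward differences of $\binom{\cdot}{h}$ for (ii).
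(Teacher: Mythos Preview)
Your proof is correct. For part (ii) you take essentially the same route as the paper: both rewrite the identity as $\sum_{u=0}^{k}(-1)^{u}\binom{k}{u}\binom{t+u}{h}=(-1)^{k}\binom{t}{h-k}$ and prove it by induction on $k$ via Pascal's rule; your forward-difference phrasing is just a repackaging of that induction.

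For part (i) the methods genuinely differ. The paper argues via generating functions, summing the left-hand side against $x^{m}$, interchanging sums, and using the binomial series to collapse everything to $\frac{1}{1+x}=\sum_{m\geq0}(-1)^{m}x^{m}$, then reading off coefficients. Your approach---upper negation followed by the polynomial form of Vandermonde's identity---is more direct and avoids any analytic step; it also makes the role of the hypothesis $t\geq m+1$ a bit more transparent (ensuring $m-t$ is a genuine negative integer so that the polynomial Vandermonde applies cleanly). Both arguments are elementary and of comparable length; yours is arguably the cleaner of the two.
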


\begin{proof}[Proof of Proposition~\ref{propiedades_de_T_y_S}]
We have
\[(2TS)_{j,k}=2(-1)^{\frac{N-b}{2}}(-\I)^j \dfrac{k!}{j!}\sum_{s=0}^{\frac{N-b}{2}}(-1)^s \dfrac{H_{N-2s-b-j}(\I r)}{(N-2s-b-j)!}\dfrac{H_{k-N+2s+b}(r)}{(k-N+2s+b)!}.\]
Note that the right hand side vanishes whenever $N-2s-b-j<0$ or $k-N+2s+b<0$, and thus for $j>k$ we have $(2TS)_{j,k}=0=F_{j,k}$, which proves (1) in this case.
Now take $j\leq k$.
Using the same argument we can replace the summation over $0\leq s\leq\frac{N-b}{2}$ by a summation over $s\geq\frac{N-b}{2}-\ell$ with $\ell = \lfloor \nicefrac{k}{2}\rfloor$ and then change variables to rewrite the above expression as
\[(2TS)_{j,k}=2(-1)^{-\ell}(-\I)^j\sbinom{k}{j}\sum_{s=0}^{\infty}(-1)^s \sbinom{k-j}{k-2\ell +2s}H_{2\ell -j -2s}(\I r)H_{k-2\ell+2s}(r)\]
In the case $k=j$, when $k$ is even the only term surviving from the sum is $s=0$ and the sum equals $1$ (using $H_0\equiv1$), while if $k$ is odd all terms in the sum vanish, so using this we get $(2TS)_{k,k} = (-1)^{k}+1= F_{k,k}$.
Assume now that $j<k$. If $k$ es even $(2TS)_{j,k}$ equals
\begin{align*} 
2(-1)^{\frac{k}{2}}(-\I) \sbinom{k}{j}\sum_{s=0}^{\infty} (-1)^s \sbinom{k-j}{2s}H_{k-j-2s}(\I r)H_{2s}(r)
= 2(-1)^{\frac{k}{2}}(-\I)^j\sbinom{k}{j}\tfrac{1}{2}\I^{k-j}(4r)^{k-j}
\end{align*}
by \ref{binomial hermite 1} from Lemma~\ref{lem:first lemma}.
The right hand side equals $\binom{k}{j}(-1)^j(4r)^{k-j}$, which is $F_{j,k}$ as desired.
The case where $k$ is odd is analogous.
This finishes proving (1).

Next we consider (2). We have
\begin{align*}
(ST)_{j,k}&= (-1)^{\frac{N-b}{2}+k}2^{2(k-j)}\sum_{t=0}^{N-1}(-\I)^t \dfrac{H_{t-N+2j+b}(r)}{(t-N+2j+b)!}\dfrac{H_{N-2k-b-t}(\I r)}{(N-2k-b-t)!}\\
&=(-1)^{\frac{N-b}{2}+k}2^{2(k-j)} \sum_{t=N-b-2j}^{N-a-2k} (-\I)^t \dfrac{H_{t-N+2j+b}(r)}{(t-N+2j+b)!}\dfrac{H_{N-2k-b-t}(\I r)}{(N-2k-a-t)!}\\
&=(-1)^{\frac{N-b}{2}+k}2^{2(j-k)} (-\I)^{N-b-2j}\frac{1}{(2(j-k))!}\sum_{t=0}^{2(k-j)}(-\I)^t \sbinom{2(j-k)}{t} H_t(r)H_{2(j-k)-t}(\I r),
\end{align*}
where in the second equality we used again that if either $t-N+2j+b<0$ or $N-2k-b-t<0$ then the sum vanishes.
In particular, if $j<k$ then $(ST)_{j,k}=0$.
For $j\geq k$, we apply \ref{binomial hermite 2} from Lemma~\ref{lem:first lemma} to obtain
\[(ST)_{j,k} = (-1)^{\frac{N-b}{2}+k}2^{2(k-j)}(-\I)^{N-b-2j}\frac{1}{(2(j-k))!}\delta_{k-j,0}=\delta_{k-j,0},\]
and the proof is complete.
\end{proof}

\begin{proof}[Proof of Proposition~\ref{propiedad de 2SC^2QT=A}]
Using the standard Hermite polynomial identities $H_l(z)=2zH_{l-1}(z)-2(l-1)H_{l-2}(z)$ and $H'_l(z)=2lH_{l-1}(z)$ we have 
\[\int_{r}^{\infty}dz\,H_{l}(z)e^{-z^2}=H_{l-1}(r)e^{-r^2}\]
for all $l\geq1$.
On the other hand, the product of two Hermite polynomials can be expressed \cite{andrews1999special} as
\begin{equation}\label{producto dos polinomios de hermite}
H_j(z) H_{k}(z)
= 
\sum_{l=0}^{\min\{j,k\}}2^ll!
\binom{j}{l}
\binom{k}{l}
H_{j+k-2l}(z).	
\end{equation}
Using these facts in \eqref{eq:matrix_F} we get
\begin{gather}
Q_{j,k}
=
C_j^2
\sum_{l=0}^{\min\{j,k\}}2^ll!
\sbinom{j}{l}	
\sbinom{k}{l}
H_{j+k-2l-1}(r)e^{-r^2}\quad\text{for $j\neq k$},\\
Q_{0,0}=\tfrac{\sqrt{\pi}}{2}\mbox{erfc}(r),\qquad
Q_{jj}=\tfrac{1}{2}\mbox{erfc}(r) +
C_j^2
\sum_{l=0}^{j-1}2^ll!
\sbinom{j}{l}
\sbinom{j}{l}
H_{2j-2l-1}(r)e^{-r^2}\quad\text{ for }j>0,
\end{gather}
where $\mbox{erfc}(x)=\frac{2}{\sqrt{\pi}}\int_{x}^\infty dz\ts e^{-z^2}$ is the \emph{complementary error function}.
Now define matrices
\[\widetilde{Q}=Q-\tfrac{1}{2}\mbox{erfc}(r)I\qquad\text{and}\qquad\tilde{A}=A-\tfrac{1}{2}\mbox{erfc}(r)\tilde{I}\]
where $I$ and $\tilde{I}$ are the identity matrices of sizes $N$ and $(N-a+1)/2$.
Note that
$\displaystyle
\tilde{A}_{j,k}=2\tilde{Q}_{2j+b-1,2k+b-1}
$. 
Thanks to (2) of Proposition \ref{propiedades_de_T_y_S}, in order to prove the result it is enough to show that
\[2S\tilde{Q}T=\tilde{A}.\]

We have
\begin{multline}\label{ecuacion}
\sqrt{\pi}e^{r^2}(S\tilde{Q}T)_{j,k}=(-1)^{\frac{N-b}{2}+k}2^{2(k-j)}
\!\!\!\!\sum_{t=N-2j-b}^{N-1}\sum_{l=0}^{t}  
\frac{1}{(t-N+2j+b)!}
2^{l-t}
\sbinom{t}{l}\frac{1}{(N-2k-b-l)!}\\
\times H_{t+2j+b-N}(r) (-\I)^l
\sum_{u=0}^{N-2k-b-l}
\sbinom{N-2k-b-l}{u}(-\I)^u
H_{N-2k-b-u-l}(\I r)
H_{t+u-l-1}(r),
\end{multline}
where we have applied the change $u\longmapsto u-l$.
We will focus on the case $j\geq k$ and explain at the end how the same arguments work for the case $j<k$.
Consider first the sum in $t$ restricted to $N-2k-b<t<N$.
Using \ref{binomial hermite 3} of Lemma~\ref{lem:first lemma} with $m=N-2k-b-l\geq0$ and $d=t+2k+b-1-N\geq 0$, the $u$ sum equals 
$\frac{\I^{N-2k-b-l}2^{N-2k-b-l}(t-l-1)!}{(t+2k+b-1-N)!}H_{t+2k+b-1-N}(r)$ for $l<t$ and is zero for $l=t$,
and thus \eqref{ecuacion} with the $t$ sum restricted to $N-2k-b<t<N$ is equal to
\begin{multline}
2^{N-2j-b}
\sum_{t=N-2k-b+1}^{N-1}
\frac{H_{t+2j+b-N}(r)H_{t+2k+b-1-N}(r) }{2^t(t-N+2j+b)!}
\sum_{l=0}^{t-1}(-1)^l
\sbinom{t}{l}
\sbinom{t-l-1}{N-2k-b-l}\\
=
2^{N-2j-b}
\sum_{t=N-2k-b+1}^{N-1}
\frac{H_{t+2j+b-N}(r)H_{t+2k+b-1-N}(r) }{2^t(t-N+2j+b)!},
\end{multline}
where we have used  \ref{binomial 1} in Lemma ~\ref{lem:second lemma} with $m=N-2k-b$ (which is even). Applying the change of variable $t\mapsto t-(N-2k-b+1)$,
using \eqref{producto dos polinomios de hermite} and changing the order of the sum, the previous expression is equal to 
\begin{multline}
\frac{1}{2^{2j-2k+1}}
\sum_{h=0}^{2k+b-2}
\sum_{t=h}^{2k+b-2}
\frac{1}{2^{t-h}(t+2j-2k+1-h)!}
\sbinom{t}{h}
H_{2t+2j-2k-1-2h}(r)\\
=
\frac{1}{2^{2j-2k+1}}
\sum_{t=0}^{2k+b-2}
\frac{1}{2^{t}}
\frac{1}{(t+2j-2k+1)!}
H_{2t+2j-2k-1}(r)
\sum_{h=0}^{2k+b-2-t}
\sbinom{t+h}{t},
\end{multline}
where in the second line we have changed $t\longmapsto t+h$ and interchanged the order of summation.
Using the identity $\sum_{i=t}^{s}\binom{i}{t}=\binom{s+1}{t+1}$, the $h$ sum above equals $\binom{2k+b-1}{2k+b-2-t}$, and then changing $t\longmapsto 2k+b-2-t$ we get
\begin{equation}\label{eq:first term}
\frac{1}{2^{2j+b-1}(2j+b-1)!}
\sum_{t=0}^{2k+b-2}
2^{t}t!
\sbinom{2j+b-1}{t}
\sbinom{2k+b-1}{t}
H_{2j+2k+2b-2t-3}(r).
\end{equation}

Now we compute the sum \eqref{ecuacion} in the region $N-2j-b \leq t \leq  N-2k-b$.
Consider first the terms with $0\leq l<t-1$.
For the sum in $u$ we use  \ref{binomial hermite 3} of Lemma \ref{lem:first lemma} as above, with the same choices of $m$ and $d$.
Now $m\geq0$ and $m+d=t-l-1\geq0$ but $d<0$, so the whole sum vanishes.
This means that we are only left with the term $l=t$, namely
\begin{multline}
(-1)^{\frac{N-b}{2}+k}2^{2(k-j)}
\sum_{t=N-2j-b}^{N-2k-b} 
\frac{1}{(t-N+2j+b)!}
\frac{1}{(N-2k-b-t)!}
H_{t+2j+b-N}(r)\\
\times\sum_{u=0}^{N-2k-b-t}  (-\I)^{u+t}
\sbinom{N-2k-b-t}{u}
H_{N-2k-b-u-t}(\I r)
H_{u-1}(r).
\end{multline}
The $u=0$ term vanishes since $H_{-1}(r)=0$.
Then changing variables $t\longmapsto t+N-2j-b$, the above becomes 
\[ (-1)^{k-j}2^{2(k-j)}
 \sum_{u=1}^{2j-2k} \frac{(-\I)^{u}H_{u-1}(r)}{u!(2j-2k-u)!}\sum_{t=0}^{2j-2k-u}
\sbinom{2j-2k-u}{t}(-\I)^{t}
H_{t}(r)
H_{2j-2k-u-t}(\I r).\]
By \ref{binomial hermite 2} of Lemma \ref{lem:first lemma} the last sum is equal to
$
\frac{1}{2^{2j-2k}(2j-2k)!}H_{2j-2k-1}(r)
$,
which completes the term for $t=2k+b-1$ for the sum
in \eqref{eq:first term}, then taking the change of variables $t\to 2k+b-1-t$ we obtain $\frac{1}{2}\sqrt{\pi}e^{r^2}\tilde{A}_{j,k}$ and the result follows.

The case $j<k$ is simpler.
In computing \eqref{ecuacion} we now have $t\geq N-2j-b+1>N-2k-b+1$, so only the first $t$ region needs to be considered.
The same argument now leads to \eqref{ecuacion} being equal to
\[
\frac{1}{2^{2j+b-1}(2j+b-1)!}
\sum_{t=0}^{2j+b-1}
2^{t}t!
\sbinom{2j+b-1}{t}
\sbinom{2k+b-1}{t}
H_{2j+2k+2b-2t-3}(r).
\]

\end{proof}

It only remains to prove Lemmas \ref{lem:first lemma} and \ref{lem:second lemma}.

\begin{proof}[Proof of Lemma ~\ref{lem:first lemma}]
Let $H_n^{[\alpha]}(x)=(\frac{\alpha}{2})^{n/2}H_n(x/\sqrt{2\alpha})$.
From \cite[Eqn. (4.2.1)]{roman_1984} (note that the book uses a different Hermite function normalization) we have
$\sum_{s=0}^{n}
\binom{n}{s}
H_{s}^{[\alpha]}(x)H_{n-s}^{[-\alpha]}(y)=(x+y)^n
$,
and therefore 
\[
\sum_{s\geq0}
\sbinom{m}{s}
H^{[-1/2]}_{m-s}(-r)
H^{[1/2]}_{s}(r)=0,
\]
while, since the $H_k(x)$ is even for even $k$ and odd for odd $k$,
\[
\sum_{s\geq0}
(-1)^s
\sbinom{m}{s}
H^{[-1]}_{m-s}(-r)
H_{s}(r)
=
\sum_{s\geq0}
\sbinom{m}{s}
H^{[-1]}_{m-s}(-r)
H_{s}(-r)
=
(-2r)^{m}.
\]	
Adding and subtracting these two equations we obtain $\sum_{s\geq0}\binom{m}{2s}H^{[-1]}_{m-2s}(-r)H_{2s}(r)=\frac{1}{2}(-2r)^{m}$ and $\sum_{s\geq0}\binom{m}{2s+1}H^{[-1]}_{m-2s-1}(-r)H_{2s+1}(r)=-\frac{1}{2}(-2r)^{m}$.
Since $H_n(r)=2^{n}H^{[1/2]}_n(r)$ and $H_n(\I r)=(-\I)^n2^{n}H^{[-1/2]}_n(-r)$, the identities in \ref{binomial hermite 1} follow.

Next we consider (2).
The case $m=0$ is straightforward (since $H_0\equiv1$).
For $m>0$ we use the identity 
\begin{equation}\label{eq:gamma-identity}
H_{n}(\gamma r)=\sum_{l=0}^{\lfloor\frac{n}{2}\rfloor}
\gamma^{n-2l}(\gamma^2-1)^l
\sbinom{n}{2l}
\frac{(2l)!}{l!}H_{n-2l}(r),
\end{equation}
which is valid for all $\gamma\in\mathbb{C}$ \cite{huang2000introduction}.
Using it with $\gamma=\I$ we get
\begin{multline}
\sum_{t=0}^{m}
(-\I)^{t}
\sbinom{m}{t}
H_{t}(r)
H_{m-t}(\I r)
=
\I^{m}
\sum_{t\geq0}\sum_{l\geq0}
(-1)^t
\sbinom{m}{t}
2^l
\sbinom{m-t}{2l}
\frac{(2l)!}{l!}H_{t}(r)H_{m-t-2l}(r),\\
=\I^{m}m!
\sum_{l\geq 0}\sum_{h\geq 0}
(-1)^t2^{l+h}
\frac{1}{l!h!}H_{m-2l-h}(r)
\frac{1}{(m-2l-2h)!}
\sum_{t\geq 0}(-1)^t
\sbinom{m-2l-2h}{t-h}=0,
\end{multline}
where in the second equality we used \eqref{producto dos polinomios de hermite} and the third one follows because the last sum equals $(-1)^h\sum_{t=0}^{m-2l-2h}(-1)^t\binom{m-2l-2h}{t}$, which vanishes by the Binomial Theorem.

For (3) we write use \eqref{eq:gamma-identity} and then \eqref{producto dos polinomios de hermite} to write the left hand side as
\begin{align}
\I^{m}
\sum_{u=0}^{m}&\sum_{l=0}^{\lfloor\frac{m-u}{2}\rfloor}
(-1)^u\sbinom{m}{u}
\sbinom{m-u}{2l}
\frac{2^l(2l)!}{l!}H_{m-u-2l}(r)H_{m+u+d}(r)\\
&=m!\tts\I^{m}
\sum_{u=0}^{m}\sum_{l=0}^{\lfloor\frac{m-u}{2}\rfloor}\sum_{h=0}^{m-u-2l}
\frac{(-1)^u(m+u+d)!2^{l+h}}{l!h!u!(m-2l-h-u)!(m+u+d-h)!}
 H_{2m+d-2l-2h}(r)\\
&=m!\tts \I^{m} \sum_{l=0}^{\lfloor\frac{m}{2}\rfloor}\sum_{h=0}^{m-2l} \frac{2^{l+h}H_{2m+d-2l-2h}(r)}{l!h!} \sum_{u=0}^{m-2l-h}
\frac{(-1)^u(m+u+d)! }{u!(m-2l-h-u)!(m+u+d-h)!}\\
&=m!(m+d)!\tts \I^{m} \sum_{l=0}^{\lfloor\frac{m}{2}\rfloor}
\sum_{h=0}^{m-2l} 
\frac{(-1)^{m-h}2^{l+h} H_{2m+d-2l-2h}(r)}{l!(m-2l-h)!(2h+2l-m)!(2m+d-2l-2h)!},
\end{align}

\noindent where we have used Lemma \ref{lem:second lemma} part \ref{binomial 2} with $k=m-2l-h$ and $t=m+d$.
Let $g(r)$ denote the last expression.
We want to prove that $g(r)=\frac{\I^m2^m(m+d)!}{d!}H_d(r)$ which, by the orthogonality of the Hermite polynomials 
($
\int_{\mathbb{R}}dr\,H_n(r)H_k(r)e^{-r^2}
=
C_n^{-2}\delta_{n,k}
$)
is equivalent to proving that, for each $n\geq0$,
\[\int_{\RR}dr\,g(r)H_n(r)e^{-r^2}=\ts\frac{\I^m2^m(m+d)!}{C_d^2\tts d!}\delta_{n,d}.\]
Passing the integration inside the sums defining $g(r)$ yields a factor $C_n^{-2}\delta_{n,2m+d-2l-2h}$ inside the sum, and thus the integral can only be non-zero if $d$ and $n$ have the same parity, in which case the only non-zero term comes from $h=m-l+\frac{d-n}{2}$.
In other words, the left hand side above equals
\[
\frac{m!(m+d)!\tts \I^{m}(-1)^{(n-d)/2}2^{m+(d-n)/2}}{C_n^2(m-(n-d))!(\frac12(n-d))!n!} \sum_{l=0}^{\lfloor\frac{m}{2}\rfloor}
(-1)^{l}\binom{\frac12(n-d)}{l}.
\]
The prefactor vanishes unless $0\leq(n-d)\leq m$, and in this case we have $\lfloor m/2\rfloor\geq(n-d)/2$, so the last sum only ranges over $0\leq l \leq (n-d)/2$ and then if $n>d$ the sum vanishes by the Binomial Theorem. 
The only possibility is then $n=d$, which forces $l=0$ and $h=m$, turning the above expression into $\frac{\I^m2^m(m+d)!}{C_d^2\tts d!}$, as desired.
\end{proof}

\begin{proof}[Proof of Lemma \ref{lem:second lemma}]
Using the Binomial Theorem we have, for $x\in\RR$ with $|x|<1$,
\[\textstyle\sum_{m\geq0}\sum_{l\geq 0}(-1)^l 
\binom{t}{l}
\binom{t-l-1}{m-l}
x^m
=\sum_{l\geq0}(-x)^l\binom{t}{l}\sum_{m\geq 0}
\binom{t-l-1}{m}x^m
=(1+x)^{t-1}\sum_{l\geq 0}
\binom{t}{l}
\left(\tfrac{-x}{1+x}\right)^l,
\]
which equals $\frac{1}{1+x}=\sum_{m\geq0}(-1)^mx^m$.
(i) now follows by equating coefficients.

For (ii) it is enough to rearrange the identity as 
$
\sum_{u=0}^{k}(-1)^u
\binom{k}{u}\binom{t+u}{h}
=
(-1)^k
\binom{t}{h-k}
$,
which can be proved by induction in $k$ using Pascal's rule
$
\binom{k+1}{u}=\binom{k}{u}+\binom{k}{u-1}.
$
\end{proof}

\vskip6pt

\noindent{\bf Acknowledgements.}
Both authors received support from Centro de Modelamiento Matemático Basal Funds FB210005 from ANID-Chile.
YY’s work was funded in part by the National Agency for Research and Development (ANID)/ DOCTORADO NACIONAL/2019-21191420. Both authors thank Professor Daniel Remenik for many helpful discussions and comments and for his exceptional support and guidance through the development of this article. 


\bibliographystyle{alpha}
\bibliography{refs}

\end{document}